\title{Unravelling Cyclic First-Order Arithmetic}
\author{Graham E.\ Leigh, University of Gothenburg\\ Dominik
  Wehr\thanks{Corresponding author; E-mail: dominik.wehr@gu.se}, University of Gothenburg}
\newcommand{\RR}{\mathcal{R}}
\newcommand{\HH}{\mathcal{H}}
\newcommand{\HA}{\mathrm{HA}}
\newcommand{\mFOL}{\mu\mathrm{FOL}}
\newcommand{\PA}{\mathrm{PA}}
\newcommand{\CPA}{\mathrm{CPA}}
\newcommand{\CHA}{\mathrm{CHA}}
\newcommand{\AHA}{\mathrm{SHA}} 
\newcommand{\APA}{\mathrm{SPA}}
\newcommand{\ACA}{\mathrm{ACA}_0}
\newcommand{\lcls}{\Theta}
\newcommand{\pcls}[1]{\Pi(#1 \sdash #1)}
\newcommand{\fcls}{\pcls{\lcls}}
\newcommand{\IS}{\mathrm{I}\Sigma}
\newcommand{\CS}{\mathrm{C}\Sigma}
\newcommand{\CD}{\mathrm{C}\Delta}
\newcommand{\IP}{\mathrm{I}\Pi}
\newcommand{\CP}{\mathrm{C}\Pi}
\newcommand{\IC}{\mathrm{I}\lcls}
\newcommand{\CC}{\mathrm{C}\lcls}
\newcommand{\rcls}[1]{\pcls{#1} \cup \Delta_0}
\newcommand{\IPar}[1]{\mathrm{I}(\rcls{#1})}
\newcommand{\IF}{\IPar{\lcls}}
\newcommand{\ol}[1]{\overline{#1}}
\newcommand{\abs}[1]{|#1|}
\newcommand{\uh}{\upharpoonright}
\newcommand{\sdash}{\Rightarrow}
\newcommand{\FV}{\mathrm{FV}}
\newcommand{\Inf}{\mathrm{Inf}}
\newcommand{\form}{\textsc{Form}}
\newcommand{\term}{\textsc{Term}}
\newcommand{\var}{\textsc{Var}}
\newcommand{\Seq}{\textsc{Seq}}
\newcommand{\Leaf}{\mathrm{Leaf}}
\newcommand{\Inner}{\mathrm{Inner}}
\newcommand{\Chld}{\mathrm{Chld}}
\newcommand{\Nds}{\mathrm{Nodes}}
\newcommand{\im}{\mathrm{im}} 
\newcommand{\Down}{\mathrm{Down}}
\newcommand{\dom}{\mathrm{dom}}
\newcommand{\RAx}{\textsc{Ax}}
\newcommand{\RInd}{\textsc{Ind}}
\newcommand{\RWk}{\textsc{Wk}}
\newcommand{\RCut}{\textsc{Cut}}
\newcommand{\RCase}{\textsc{Case}}
\newcommand{\RDrop}{\textsc{Pop}}
\newcommand{\RComp}{\textsc{Comp}}
\newcommand{\RBud}{\textsc{Bud}}
\newcommand{\RBin}{\textsc{Bin}}
\newcommand{\RUn}{\textsc{Un}}
\newcommand*{\langeq}{\mathrel{\vcenter{\baselineskip0.5ex \lineskiplimit0pt
      \hbox{\scriptsize.}\hbox{\scriptsize.}}}%
  \mathrel{\vcenter{\baselineskip0.5ex \lineskiplimit0pt
      \hbox{\scriptsize.}\hbox{\scriptsize.}}}%
  =}
\colorlet{dwnote}{wpurple}
\colorlet{gelnote}{cyan}
\colorlet{insertcolour}{cyan}
\colorlet{deletecolour}{red}
\begin{document}

\maketitle

\begin{abstract}
  Cyclic proof systems for Heyting and Peano arithmetic eschew induction axioms
  by accepting proofs which are finite graphs rather than trees. Proving that such
  a cyclic proof system coincides with its more conventional variants is often
  difficult: Previous proofs in the literature rely on intricate arithmetisations
  of the metamathematics of the cyclic proof systems.

  In this article, we present a simple and direct embedding of cyclic proofs for Heyting
  and Peano arithmetic into
  purely inductive, i.e. `finitary', proofs by adapting a translation introduced
  by Sprenger and Dam for a cyclic proof system of $\mFOL$ with explicit ordinal
  approximations. We extend their method to recover Das' result of $\CP_n \subseteq
\IP_{n + 1}$ for Peano arithmetic.
  As part of
  the embedding we present a novel representation of cyclic proofs as a labelled
  sequent calculus.
\end{abstract}

\textsc{Keywords:} Cyclic proof theory; Heyting arithmetic; Peano arithmetic; proof translation

\textsc{Funding:} This work was supported by the Knut and Alice Wallenberg Foundation [2020.0199].

\section{Introduction}
\label{sec:intro}

Cyclic proof systems admit derivations whose underlying structures are finite
graphs, rather than finite trees.
Such proof systems are especially well-suited to logics and theories which
feature fixed points or (co-)inductively defined objects.
Compared to conventional `inductive' proof systems, cyclic proof systems often do not
require explicit induction rules, which are instead simulated by cycles. This
makes cyclic proof systems particularly well-suited to proof theoretic
investigations and applications in automated theorem proving, which are both
often hampered by induction rules.

There are two common methods for proving that a given cyclic proof system
corresponds to a particular logic or theory.
The usual method is via direct soundness and completeness arguments.
This approach is often not well-suited to arithmetical
theories.
In such cases, equivalence is instead established via proof theoretic translations
between
the cyclic and a conventional inductive proof system.
Translating inductive proofs into cyclic proofs
is usually straightforward whereas the opposite direction requires considerable effort.

For Peano and Heyting arithmetic, two general strategies for obtaining such
translations are present in the literature.
Simpson's approach~\cite{simpsonCyclicArithmeticEquivalent2017},
later refined by Das~\cite{dasLogicalComplexityCyclic2020},
formalises a soundness argument for cyclic proofs in subsystems of
second-order arithmetic, using reflection and conservativity results to
`return to' the first-order setting. Berardi and
Tatsuta~\cite{berardiEquivalenceInductiveDefinitions2017,berardiEquivalenceIntuitionisticInductive2017}
instead rely on Ramsey-style order-theoretic principles which can be
formalised in Heyting arithmetic. Both
approaches involve the formalisation of complex mathematical
concepts in first- and second-order arithmetic.

The literature contains other translation methods, although not applied to
arithmetical theories. Of significance to this article is a method first
employed by Sprenger and Dam~\cite{sprengerStructureInductiveReasoning2003a} for
first-order logic with fixed points $\mFOL$. This approach unfolds a cyclic
proof until inductive hypotheses can simply be `read off' the proof's structure.
The inductive proof resulting from this procedure thus naturally corresponds to
an unfolding of the initial cyclic proof.

In this article, we adapt Sprenger and Dam's `unfolding' method to present a proof translation between
the cyclic and inductive proof system for Heyting arithmetic.
The strategy can be adapted to Peano
arithmetic with minor adjustments.
Our construction of the inductive hypothesis in the translated proof refines
those employed by Sprenger and Dam, which allows recovery Das' logical
complexity result: In cyclic $\PA$, proofs with $\Pi_n$ cycles correspond to
$\IP_{n + 1}$-proofs. We obtain a similar result for $\HA$.
As part of the proof, we introduce a novel syntactic representation of the
cyclic proofs resulting from the unfoldings called
\emph{stack-controlled cyclic proofs}. These allow a clear description the
process of extracting invariants from such proofs
and thus lead to a simpler argument for
the translation from cyclic proofs to inductive proofs.
While we only present a stack-controlled system for Heyting arithmetic, the
notion readily generalises to cyclic proof systems for other logics.
Furthermore, we provide detailed summaries of previous approaches to translating
cyclic proof systems for Peano or Heyting arithmetic to inductive proof systems.

\paragraph{Overview}
\Cref{sec:cha} introduces the inductive and cyclic
proof systems for Heyting arithmetic.
\Cref{sec:sha} presents \emph{stack-labelled} proofs of cyclic Heyting
arithmetic, a proof system designed to ease the translation.
In \Cref{sec:translation}, we present the translation of cyclic, stack-labelled
proofs to plain, inductive proofs.
The full translation result is obtained in \Cref{sec:combinatorics} by proving that every proof
in cyclic Heyting arithmetic can be unfolded into a stack-labelled proof.
We conclude in \Cref{sec:conclusion} by extending the result to Peano
arithmetic, analysing the translation in terms of logical and proof size
complexity and discussing the wider applicability of the method presented in
this article.

\section{Cyclic Heyting Arithmetic}
\label{sec:cha}

The term and formula language of first-order arithmetic is given below, fixing a
countable set of variables $\var$. Write $\FV(\varphi)$
for the set of \emph{free variables} which occur unbound in a formula $\varphi$.
\begin{align*}
  s,t \in \term \langeq~& x ~|~ 0 ~|~ S s ~|~ s + t ~|~ s \cdot t  & x \in \var \\
  \varphi, \psi \in \form \langeq~& s = t ~|~ \bot ~|~ \varphi \wedge \psi ~|~ \varphi \vee \psi ~|~ \varphi \to \psi ~|~ \forall x.\varphi ~|~ \exists x. \varphi
\end{align*}
We define $\neg \varphi \coloneq \varphi \to \bot$ and $\top \coloneq \bot \to \bot$.
Denote by $[t / x]$ the usual \emph{substitution operation}, substituting the term
$t$ into all free occurrences of the variable $x$ in a term or formula. This is
a partial operation, $\varphi[t / x]$ being undefined if the free variables in
$t$ are not distinct from the bound variables in $\varphi$. Henceforth, writing
$\varphi[t / x]$ implicitly asserts that the substitution is defined.
We sometimes write $\varphi(x)$ as a shorthand for
$\varphi[x / y]$ for an unspecified variable $y \neq x$ if $x \not\in
\FV(\varphi)$. By analogy, the notation $\varphi(t)$ in the vicinity of
$\varphi(x)$ then denotes $\varphi[t / y]$.

We begin by recalling the inductive proof system for Heyting arithmetic.

\begin{definition}[Heyting arithmetic]
  The \emph{sequents} of \emph{Heyting arithmetic} are expressions $\Gamma \sdash \delta$ where
  $\Gamma$ is a finite set and $\delta$ a singular formula of first-order arithmetic.
  Write $\Gamma, \varphi$ for
  $\Gamma \cup \{\varphi\}$ and $\Gamma, \Gamma'$ for $\Gamma \cup \Gamma'$.
  The \emph{derivation rules} of $\HA$ comprise of the following choice of standard rules for
  intuitionistic first-order logic with equality,
  \begin{mathpar}
    \inference[\RAx]{}{\Gamma{}, \delta{} \sdash{} \delta}

    \inference[$\to$L]{\Gamma{}, \varphi{} \sdash{} \delta{} \quad \Gamma{} \sdash{}
      \psi{}}{\Gamma{}, \varphi{} \to \psi{} \sdash{} \delta}

    \inference[$\to$R]{\Gamma{}, \varphi{} \sdash{} \psi{}}{\Gamma{} \sdash{} \varphi{} \to \psi{}}

    \inference[$\wedge$L]{\Gamma{}, \varphi{}, \psi{} \sdash{} \delta{}}{\Gamma{},
      \varphi{} \wedge{} \psi{} \sdash \delta{}}

    \inference[$\wedge$R]{\Gamma \sdash \varphi \quad \Gamma \sdash \psi}{\Gamma \sdash \varphi \wedge
      \psi}

    \inference[$\vee$L]{\Gamma, \varphi \sdash \delta \quad \Gamma, \psi \sdash \delta}{\Gamma, \varphi \vee{}
      \psi \sdash \delta}

    \inference[$\vee$R]{\Gamma{} \sdash \varphi_i}{\Gamma{} \sdash
      \varphi_1 \vee{} \varphi_2}

    \inference[$\forall$L]{\Gamma, \varphi[t / x] \sdash \delta}{\Gamma, \forall x.\varphi \sdash \delta}

    \inference[$\forall$R]{\Gamma \sdash \varphi \quad x \not\in
      \FV(\Gamma, \forall x. \varphi)}{\Gamma
      \sdash \forall x.\varphi}

    \inference[$\exists$L]{\Gamma, \varphi \sdash \delta \quad x \not\in
      \FV(\Gamma, \exists x. \varphi, \delta)}{\Gamma, \exists x. \varphi \sdash \delta}

    \inference[$\exists$R]{\Gamma \sdash \varphi[t / x]}{\Gamma \sdash \exists x. \varphi}

    \inference[$\bot$L]{}{\Gamma, \bot \sdash \delta}

    \inference[$=$L]{\Gamma[x / y] \sdash \delta[x / y] \quad x,
      y \not\in \FV(s, t)}{\Gamma[s / x, t / y], s = t \sdash \delta[s / x, t
      / y]}

    \inference[$=$R]{}{\Gamma \sdash t = t}
  \end{mathpar}
  with the following two structural rules,
  \begin{mathpar}
    \inference[\RWk]{\Gamma \sdash \delta}{\Gamma, \Gamma' \sdash \delta}
  
    \inference[\RCut]{\Gamma \sdash \varphi \quad \Gamma, \varphi \sdash \delta}{\Gamma \sdash \delta}
  \end{mathpar}
  the following axiomatic sequents which characterise the function symbols of
  first-order arithmetic,
  \begin{align*}
    & \sdash 0 \neq S t & S s = S t & \sdash s = t && \sdash s + 0 = 0 \\
    &\sdash s + St = S(s + t) & & \sdash s \cdot 0 = 0 && \sdash s \cdot St = (s \cdot t) + s
  \end{align*}
  and the axiomatic sequents of \emph{induction}, for any formula $\varphi$,
  \[ \varphi(0), \forall x. \varphi(x) \to \varphi(Sx) \sdash \varphi(s) . \]

  Formally, a proof in Heyting arithmetic is
  a pair $\pi = (T, \rho)$ consisting of a finite tree $T$ and a function
  $\rho : \Nds(T) \mapsto \RR$, assigning instances of derivation rules to nodes of
  $T$. We define $\lambda : \Nds(T) \mapsto \Seq$ to map a $t \in \Nds(T)$ to
  the conclusion of $\rho(t)$. If $t \in \Nds(T)$ and $\rho(t)$ is a rule with $n$ premises $\Gamma_1 \sdash
  \delta_1, \ldots, \Gamma_n \sdash \delta_n$ then $\Chld(t)$ must be $t_1,
  \ldots, t_n$ such that $\lambda(t_i) = \Gamma_i \sdash
  \delta_i$. We consider axiomatic sequents as premiseless derivation rules.
  We call the sequent $\lambda(r)$ labelling the root of $T$ the \emph{endsequent}.
  If a proof with endsequent $\Gamma \sdash \delta$
  exists then $\Gamma \sdash \delta$ is \emph{provable in Heyting arithmetic},
  denoted by $\HA \vdash \Gamma \sdash \delta$.
\end{definition}

Recall that the order $x < y$ can be defined as $\exists z. y = x + Sz$ and $x
\leq y$ as $x < Sy$. Further, we employ the shorthands $\forall x < t. \varphi
\coloneq \forall x.~x < t \to \varphi$ and $\exists x < t. \varphi \coloneq
\exists x.~x < t \wedge \varphi$ for terms $t$.
The
following admissible rules and structural property are relied on by the
translation given in \Cref{sec:translation}.

\begin{lemma}\label{lem:rind}
  The following derivation rules are admissible in $\HA$
  for $x \not\in \FV(\Gamma, \varphi)$.
  \begin{mathpar}
    \inference[$\RInd$]{\Gamma, \forall y < x.~\varphi[y / x] \sdash \varphi}{\Gamma \sdash \forall x. \varphi}

    \inference[$\RCase_x$]{\Gamma(0) \sdash \delta(0) \quad \Gamma(Sx) \sdash
      \delta(Sx)}{\Gamma(x) \sdash \delta(x)}

    \inference[$\forall$L*]{\Gamma, \varphi[s / x] \sdash \delta}{\Gamma, s < t,
      \forall x < t.\varphi \sdash \delta}

    \inference[$\forall$R*]{\Gamma, x < t \sdash \varphi \quad x \not\in
      \FV(\Gamma, \forall x < t. \varphi)}{\Gamma \sdash \forall x < t.\varphi}
  \end{mathpar}
\end{lemma}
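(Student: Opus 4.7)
My plan is to derive each of the four rules within the standard apparatus of $\HA$. Three are immediate from the abbreviations and basic logical rules; the fourth, course-of-values induction, reduces to ordinary induction.

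The bounded rules $\forall L^*$ and $\forall R^*$ are unfoldings of the abbreviation $\forall x < t.\,\varphi \coloneq \forall x.\,x < t \to \varphi$. For $\forall L^*$ I apply $\forall L$ with witness $s$ to introduce $s < t \to \varphi[s/x]$, then apply $\to L$ against the side formula $s < t$. For $\forall R^*$ I apply $\to R$ followed by $\forall R$, the eigenvariable condition being inherited from the premise.

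For $\RCase_x$, I would first establish the disjunction lemma $\HA \vdash\ \sdash \forall x.\,x = 0 \vee \exists y.\,x = Sy$ by ordinary induction on $x$: the base case uses $=R$ and $\vee R$, and the successor step uses $\exists R$ with witness $x$ followed by $=R$ and $\vee R$. Given the two premises of $\RCase_x$, I then cut against the $x$-instance of this disjunction and split: the left disjunct $x = 0$ discharges against the first premise $\Gamma(0) \sdash \delta(0)$ by $=L$, and the right disjunct $\exists y.\,x = Sy$ discharges against $\Gamma(Sx) \sdash \delta(Sx)$ by $\exists L$ followed by $=L$.

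The main work lies in $\RInd$. The plan is to apply ordinary induction to the auxiliary formula $\psi(x) \coloneq \forall y < x.\,\varphi[y/x]$, with $y$ chosen fresh for $\Gamma$ and $\varphi$. The base case $\sdash \psi(0)$ holds vacuously since $y < 0$ is refutable in $\HA$. For the inductive step $\Gamma, \psi(x) \sdash \psi(Sx)$, an application of $\forall R^*$ reduces the goal to $\Gamma, \psi(x), y < Sx \sdash \varphi[y/x]$. Using the provable equivalence $y < Sx \leftrightarrow y < x \vee y = x$, I split into cases: if $y < x$ then the conclusion follows from $\psi(x)$ via $\forall L^*$; if $y = x$ then $=L$ reduces the goal to the premise $\Gamma, \psi(x) \sdash \varphi$ of $\RInd$ (modulo a renaming of $x$ to $y$). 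From the resulting $\Gamma \sdash \forall x.\,\psi(x)$, I obtain $\Gamma \sdash \forall x.\,\varphi$ by introducing $x$ with $\forall R$, instantiating $\psi$ at $Sx$, and extracting $\varphi$ via $\forall L^*$ using the provable fact $x < Sx$. The only subtlety is the bookkeeping of fresh variables and defined substitutions in setting up $\psi$, which I expect to be routine rather than genuinely obstructive.
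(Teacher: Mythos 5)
The paper states this lemma without proof, treating all four rules as standard admissible rules of $\HA$, so there is no official argument to compare against; your derivations are exactly the routine ones the authors implicitly rely on ($\forall$L*/$\forall$R* by unfolding the bounded-quantifier abbreviation, $\RCase_x$ via the inductively proved dichotomy $\forall x.\,x = 0 \vee \exists y.\,x = Sy$, and $\RInd$ by ordinary induction on $\forall y < x.\,\varphi[y/x]$ using $y < Sx \leftrightarrow y < x \vee y = x$), and they are correct. The only point worth flagging is that with the paper's template-style formulation of $=$L, the ``discharge by $=$L'' steps in $\RCase_x$ are most cleanly carried out by cutting against Leibniz instances $s = t, \chi(s) \sdash \chi(t)$ (themselves obtained from $=$L applied to an axiom), but this is bookkeeping rather than a gap.
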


\begin{fact}\label{lem:ha-ren}
  If $\HA \vdash \Gamma \sdash \delta$ then $\HA \vdash \Gamma[t / x] \sdash
  \delta[t / x]$ for any term $t$ and variable $x$.
\end{fact}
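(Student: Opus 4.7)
The plan is to proceed by induction on a proof $\pi$ of $\Gamma \sdash \delta$, preceded by an $\alpha$-renaming preprocessing step that ensures the substitution $[t/x]$ is defined throughout $\pi$ and that all eigenvariable side conditions remain valid after the substitution is pushed through.

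First I would establish an auxiliary freshness renaming lemma: given a proof $\pi$ witnessing $\HA \vdash \Gamma \sdash \delta$, and a variable $y$ that does not occur, free or bound, anywhere in $\pi$, one has $\HA \vdash \Gamma[y/z] \sdash \delta[y/z]$ for any variable $z$. This is a routine induction on $\pi$ because the freshness of $y$ guarantees that no capture can occur, no intermediate substitution becomes undefined, and every eigenvariable condition continues to hold after the swap. Iterating this lemma, every bound variable and eigenvariable occurring in $\pi$ can be replaced by a variable fresh for both $x$ and $\FV(t)$, yielding a proof $\pi'$ of the same endsequent whose bound variables and eigenvariables are all disjoint from $\{x\} \cup \FV(t)$.

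With $\pi'$ in hand, the main induction on the structure of $\pi'$ is mechanical. For the logical and structural rules one applies the induction hypothesis to each premise and reapplies the same rule to the substituted sequents; standard substitution identities such as $\varphi[s/y][t/x] = \varphi[t/x][s[t/x]/y]$ (for $y \neq x$ and $y \not\in \FV(t)$) certify that the conclusion still matches, while the freshness of every eigenvariable with respect to $t$ keeps the side conditions of $\forall$R, $\exists$L, and $=$L intact. The arithmetic function axioms are term-schematic, so substitution turns them into further instances of the same axioms. The induction axioms $\varphi(0), \forall y.\varphi(y) \to \varphi(Sy) \sdash \varphi(s)$ similarly survive $[t/x]$ because the preprocessing ensured $y \neq x$ and $y \not\in \FV(t)$, so the substitution commutes with the quantifier and yields another induction axiom.

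The main obstacle is precisely the interaction of the partial substitution operation with the eigenvariable side conditions: a naive induction on $\pi$ collapses as soon as $t$ mentions a bound variable or eigenvariable of $\pi$, and intermediate sequents on which $[t/x]$ is undefined cannot even be written down. Routing through the freshness renaming lemma dissolves both issues at once and reduces the fact to a mechanical structural induction.
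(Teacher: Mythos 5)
The paper records this as an unproved \emph{Fact}, so there is no official argument to compare against; your proof --- first renaming eigenvariables and internal bound variables of the derivation away from $\{x\}\cup\FV(t)$ via a routine freshness lemma, then pushing $[t/x]$ through the rules by structural induction using the standard substitution commutation identities --- is exactly the standard argument one would supply, and it is correct. The only point worth tightening is that your renaming lemma as stated acts on \emph{free} occurrences (which suffices for eigenvariables), whereas renaming bound variables inside internal formulas is a separate, equally routine $\alpha$-conversion step; note that the endsequent itself never needs renaming, since the paper's definedness convention for $\delta[t/x]$ already forces $\FV(t)$ to be disjoint from the bound variables of $\Gamma,\delta$.
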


We proceed by defining a cyclic proof system $\CHA$ for Heyting arithmetic. It eschews
the induction axioms in favour of \emph{cycles} which allow leaves to
be considered closed if they are labelled by a sequent that appears elsewhere in
the proof.

A \emph{cyclic tree} is a pair $C = (T, \beta)$ consisting of a finite tree $T$ and
a partial function $\beta \colon \Leaf(T) \to \Inner(T)$ mapping some leaves of
$T$ to its inner nodes.
If $t \in \dom(\beta)$ we call it a \emph{bud} and $\beta(t)$ \emph{its companion}.

\begin{definition}[$\CHA$-pre-proofs]\label{def:pre-proof}
  The sequents of \emph{cyclic Heyting arithmetic} are the same as those of Heyting
  arithmetic. The \emph{derivation rules} of $\CHA$ are the same as those of
  $\HA$ except the induction axioms are replaced by a $\RCase$-rule:
  \[
    \inference[$\RCase_x$]{\Gamma(0) \sdash \delta(0)
      \quad \Gamma(Sx) \sdash \delta(Sx)}{\Gamma(x) \sdash \delta(x)}
  \]

  A \emph{pre-proof} of $\CHA$ is a pair $\pi = (C, \rho)$
  consisting of a cyclic tree $C = (T, \beta)$
  and a function $\rho : \Nds(T) \setminus \dom(\beta) \to \RR$ labelling all non-bud
  nodes with derivation rules. We define $\lambda : \Nds(T) \to \Seq$ as follows
  \[
    \lambda(t) \coloneq
    \begin{cases}
      \text{conclusion of } \rho(t) & t \in \dom(\rho) \\
      \text{conclusion of } \rho(\beta(t)) & t \in \dom(\beta)
    \end{cases}.
  \]
  If $t \in \Nds(T)$ and $\rho(t)$ is a rule with $n$ premises $\Gamma_1 \sdash
  \delta_1, \ldots, \Gamma_n \sdash \delta_n$ then $\Chld(t)$ must be $t_1,
  \ldots, t_n$ such that $\lambda(t_i) = \Gamma_i \sdash \delta_i$.
\end{definition}

The structures defined by \Cref{def:pre-proof} are called pre-proofs, rather than
proofs, as they need not be sound. Such is the case for the pre-proof of
$0 \neq 1$ below, the two nodes marked with $\star$ forming a cycle.

\begin{comfproof}
  \AXC{$\sdash 0 + 0 = 0$}
  \AXC{$\sdash 0 \neq 1~\star$}
  \RightLabel{\small\RWk}
  \UIC{$0 + 0 = 0 \sdash 0 \neq 1$}
  \RightLabel{\small\RCut}
  \BIC{$\sdash 0 \neq 1~\star$}
\end{comfproof}

Cyclic Heyting arithmetic, like most cyclic proof systems, thus requires an
additional \emph{soundness condition} delineating between proofs and mere
pre-proofs. The soundness condition we give below is a so-called \emph{global
  trace condition}. In \Cref{sec:induction-orders}, we discuss induction orders
which are an alternative soundness condition for $\CHA$.

\begin{definition}[$\CHA$-proofs]\label{def:cha-proof}
  Let $\pi = (C, \rho)$ be a pre-proof.
  Consider an infinite branch $(\Gamma_i \sdash \delta_i)_{i \in \omega}$
  through $\pi$.
  A variable $x$ is said to have \emph{a
    trace along} $(\Gamma_i \sdash \delta_i)_{i \in \omega}$ if there exists an
   $n \in \omega$ such that $x \in \FV(\Gamma_i, \delta_i)$ for all $i \geq n$.
   Such a trace on $x$ is said to be \emph{progressing} if it passes through
   instances of the $\RCase_x$-rule (for the same variable \( x \)) infinitely often.

  A pre-proof $\pi$ is a \emph{proof} in cyclic Heyting arithmetic if for every
  infinite branch through $\pi$ there exists a variable which has a progressing trace along
  it. $\CHA \vdash \Gamma \sdash \delta$ denotes provability in $\CHA$.
\end{definition}

To illustrate the global trace condition, we prove that $\CHA$ is
sound over the standard model of the natural numbers $\omega$. 
For $V \subseteq \var$, an assignment is a function $\rho : V \to \omega$ mapping each 
first-order variable in $V$ to a natural number.
The satisfaction relation $\rho \vDash \Gamma \sdash \delta$, for an assignment
\( \rho \) with $\dom(\rho) \supseteq \FV(\Gamma, \delta)$ and sequent $\Gamma \sdash
\delta$, is
defined in the usual manner.
In the following, \( \omega \vDash \Gamma \sdash \delta \) expresses that for
all suitable assignments \( \rho \), we have \( \rho \vDash \Gamma \sdash \delta \).

\begin{proposition}[Soundness]\label{lem:sound}
  If $\CHA \vdash \Gamma \sdash \delta$ then $\omega \vDash \Gamma \sdash \delta$.
\end{proposition}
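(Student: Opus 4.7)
The plan is to prove the contrapositive by an infinite descent argument built on the local soundness of each derivation rule of $\CHA$. Assume $\omega \not\vDash \Gamma \sdash \delta$, witnessed by some assignment $\rho_0$. I would inductively construct a sequence of pairs $(t_i, \rho_i)_{i \in \omega}$ where $t_i \in \Nds(\pi)$, the assignment $\rho_i$ falsifies $\lambda(t_i)$, and $t_{i+1}$ is either a child of $t_i$ in the underlying tree or — when $t_i$ is a bud — the companion $\beta(t_i)$ (in which case $\rho_{i+1} \coloneq \rho_i$, valid because $\lambda(t_i) = \lambda(\beta(t_i))$). The existence of an appropriate successor whenever $t_i$ is not a bud follows by a routine case analysis on $\rho(t_i)$: each rule enjoys the property that any assignment falsifying its conclusion can be extended (as needed for $\exists$L, $\forall$R) to one falsifying a suitable premise.

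Since each axiomatic sequent of $\CHA$ is valid in $\omega$, the construction cannot terminate at a leaf. Hence the sequence is infinite and induces an infinite branch $(\Gamma_i \sdash \delta_i)_{i \in \omega}$ through $\pi$ as in \Cref{def:cha-proof}. The global trace condition supplies a variable $x$ and an index $i_0$ such that $x \in \FV(\Gamma_i, \delta_i)$ for all $i \geq i_0$ and $\RCase_x$ is applied infinitely often along the branch.

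The heart of the argument is to show that $\rho_i(x)$ strictly decreases at every occurrence of $\RCase_x$ along the trace. When $\rho(t_i) = \RCase_x$ with conclusion $\Gamma(x) \sdash \delta(x)$, the $0$-premise does not contain $x$ free, so the trace on $x$ would be broken were the branch to enter it; the branch must therefore choose the $Sx$-premise, which by the local soundness clause forces $\rho_i(x) \geq 1$ and $\rho_{i+1}(x) = \rho_i(x) - 1$. For every other rule, one verifies that $\rho_{i+1}(x) = \rho_i(x)$: rules acting on formulas not involving $x$ leave $\rho_i$ untouched, while $\exists$L, $\forall$R, and $=$L only affect variables bound or freshly chosen, never the tracked $x$. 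Hence $(\rho_i(x))_{i \geq i_0}$ contains an infinite strictly descending subsequence in $\NN$, a contradiction.

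The principal obstacle is the bookkeeping in the above construction: one must carefully specify, for each rule, how to update $\rho_i$ and confirm that this update does not perturb the value assigned to any variable with a trace. Once this invariant is established for all inference rules simultaneously, the global trace condition translates directly into well-foundedness of $\omega$, yielding the required contradiction.
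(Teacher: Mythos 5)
Your proposal is correct and takes essentially the same route as the paper's own proof: both build an infinite branch of falsifying assignments by local soundness (carrying assignments unchanged across buds), then use the progressing trace to extract an infinitely decreasing sequence $(\rho_i(x))_{i\in\omega}$ in $\omega$, contradicting well-foundedness. The only cosmetic difference is that you justify the strict decrease at $\RCase_x$ by noting the $0$-premise would break the trace, whereas the paper builds the choice of premise directly into the construction of $\rho_{i+1}$; the two observations amount to the same thing.
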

\begin{proof}
  Suppose $\pi$ was a $\CHA$-proof of $\Gamma \sdash \delta$ and suppose, towards
  contradiction, that there was an assignment $\rho : \FV(\Gamma,\delta) \to \omega$ such that
  $\rho \not\vDash \Gamma \sdash \delta$. Consider the $\CHA$-rule applied
  to the root of $\pi$, deriving $\Gamma \sdash \delta$: As the rule is sound, it must have
  a premise $\Gamma_1 \sdash \delta_1$ for which there exists an assignment
  $\rho_1$ with $\rho_1 \not\vDash \Gamma_1 \sdash \delta_1$. Note that
  $\rho_1$ can always be chosen to extend $\rho$, except for the variable $x$ if the last applied rule
  is $\RCase_x$ and $\rho(x) > 0$, in which case $\rho_1(x) = \rho(x) - 1$. This argument can
  be iterated infinitely, following the
  cycles `back down' the proof when reaching buds, because the
  contradicted premise of a derivation rule can never be an axiom.
  Thus, iterating this
  argument yields an infinite branch $(\Gamma_i \sdash \delta_i)_{i \in \omega}$
  through $\pi$ and a sequence $(\rho_i)_{i \in \omega}$ of contradicting
  assignments. Then there exists a progressing $x$-trace along $(\Gamma_i \sdash
  \delta_i)_{i \in \omega}$ and by the aforementioned relation of the $(\rho_i)_{i \in
    \omega}$, the sequence $(\rho_i(x))_{i \in \omega}$ in $\omega$ never
  increases but decreases whenever $\RCase_x$ is passed, which takes place
  infinitely often. Such an infinitely decreasing sequence contradicts the
  well-foundedness of $\omega$.
\end{proof}

The system $\CHA$ is a variation of the cyclic proof system for Heyting
arithmetic considered by Berardi and Tatsuta
in~\cite{berardiEquivalenceIntuitionisticInductive2017}. Their system is an
extension of Brotherston's cyclic proof system for first-order logic with
inductive definitions~\cite{brotherstonSequentCalculusProof2006} with the usual
axiomatisations of $s + t, s \cdot t$ and $S t$. They employ a
unary inductive predicate $N(t)$ to assert that the term $t$ denotes a natural
number, their $\RCase$-rule and trace condition being defined in terms of the
predicate. As in our system every term denotes a natural number, we have chosen
to not include the predicate $N t$ and adjust the $\RCase$-rule accordingly.
A crucial difference to Berardi and Tatsuta's system, which we rely on
throughout the article, is that traces only ever follow a single variable,
without branching or rejoining of traces. This makes the notion of `progress' in
our system much simpler, easing the combinatorics of \Cref{sec:combinatorics}.

$\HA$ and $\CHA$ prove the same theorems. The direction from $\HA$ to $\CHA$
follows from the fact that induction can be simulated by the use of cycles in
$\CHA$, as is shown below. The opposite direction is much more difficult to
prove and the subject of the remaining sections of this article.

\begin{theorem}
  If $\HA \vdash \Gamma \sdash \delta$ then $\CHA \vdash \Gamma \sdash \delta$.
\end{theorem}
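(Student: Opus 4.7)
The plan is to induct on the structure of the given $\HA$-derivation. Every derivation rule and function-symbol axiom of $\HA$ is also available in $\CHA$, so each inductive case apart from the induction axioms translates by reassembling the translated subderivations underneath the same rule instance. No back-edges are introduced in such cases, so the resulting pre-proofs have no infinite branches and the trace condition of \Cref{def:cha-proof} holds vacuously.

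The only nontrivial case is the induction axiom $\varphi(0), \forall x.\varphi(x) \to \varphi(Sx) \sdash \varphi(s)$. Writing $\Gamma \coloneq \varphi(0), \forall x.(\varphi(x) \to \varphi(Sx))$ and fixing a variable $z \notin \FV(\Gamma, \varphi(s))$, I would construct the required $\CHA$-proof in layers: first a cyclic subproof of $\Gamma \sdash \varphi(z)$ using $\RCase_z$ and a single back-edge; then $\forall R$ to lift this to $\Gamma \sdash \forall z.\varphi(z)$; and finally a cut against $\Gamma, \forall z.\varphi(z) \sdash \varphi(s)$, which follows from $\RAx$ by one application of $\forall L$.

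The cyclic subproof of $\Gamma \sdash \varphi(z)$ is built as follows. At the root, apply $\RCase_z$. The base premise $\Gamma \sdash \varphi(0)$ is closed by $\RAx$. The step premise $\Gamma \sdash \varphi(Sz)$ is handled by applying $\forall L$ to the universal formula in $\Gamma$ with witness $z$, retaining the universal formula in the antecedent (which is legitimate because antecedents are sets), yielding $\varphi(z) \to \varphi(Sz)$ in the context, and then $\to$L. This produces one premise closed by $\RAx$ and another premise labelled $\Gamma \sdash \varphi(z)$, which is designated a bud whose companion is the root of this subproof.

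The only infinite branch of the resulting pre-proof traverses this back-edge. Along it, $z$ occurs free in the succedent of every sequent (as $\varphi(z)$ or $\varphi(Sz)$) and, by the freshness of $z$, never appears in $\Gamma$, so $z$ carries a trace; each traversal of the cycle passes through exactly one instance of $\RCase_z$, so the trace is progressing and the global trace condition holds. The only subtlety I anticipate is the bookkeeping of freshness side conditions for $z$ imposed by $\forall R$, $\forall L$, and $\RCase_z$ simultaneously, but these all reduce to the single requirement $z \notin \FV(\Gamma, \varphi(s))$, which was secured at the outset.
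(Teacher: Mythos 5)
Your proposal is correct and matches the paper's own argument: the paper likewise derives each induction axiom by a cut against $\forall$R applied to a one-cycle $\RCase_x$ derivation with the back-edge at $\ol{\varphi} \sdash \varphi(x)$, verifies the progressing trace on the case-distinguished variable, and handles the remaining rules by direct transcription (phrased as grafting these cyclic subproofs onto the induction leaves rather than as structural induction, which amounts to the same thing). No gaps.
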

\begin{proof}
  We begin by showing that $\CHA$ proves every instance $\varphi(0), \forall x.
  \varphi(x) \to \varphi(Sx) \sdash \varphi(s)$ of the induction scheme.
  Take $\ol{\varphi}$ to be a shorthand for the left-hand side of the induction scheme. The two
  sequents marked with a $\star$ form a cycle.
  \begin{comfproof}
    \AXC{}
    \LSC{\textsc{Ax}}
    \UIC{$\ol{\varphi} \sdash \varphi(0)$}
    \AXC{$\ol{\varphi} \sdash \varphi(x) ~~\star$}
    \AXC{}
    \RSC{\textsc{Ax}}
    \UIC{$\ol{\varphi}, \varphi(S\,x) \sdash \varphi(S\,x)$}
    \RSC{$\forall$L,$\to$L}
    \BIC{$\ol{\varphi} \sdash \varphi(S\,x)$}
    \LSC{$\textsc{Case}_x$}
    \BIC{$\ol{\varphi} \sdash \varphi(x) ~~\star$}
    \LSC{$\forall$R}
    \UIC{$\ol{\varphi} \sdash \forall x. \varphi(x)$}
    \AXC{}
    \RSC{\textsc{Ax}}
    \UIC{$\ol{\varphi}, \varphi(s) \sdash \varphi(s)$}
    \RSC{$\forall$L}
    \UIC{$\ol{\varphi}, \forall x. \varphi(x) \sdash \varphi(s)$}
    \insertBetweenHyps{\hskip -6ex}
    \LSC{\textsc{cut}}
    \BIC{$\ol{\varphi} \sdash \varphi(s)$}
  \end{comfproof}
  There exists precisely one infinite branch through the pre-proof above, following the
  $\star$-cycle infinitely. This infinite branch has a progressing $x$-trace starting in
  the premise of $\forall$R. Thus, the pre-proof above constitutes a proof.

  Towards the original claim, suppose $\HA \vdash \Gamma \sdash \delta$. A
  $\HA$-proof $\pi$ witnessing this fact differs from a $\CHA$-pre-proof only by the
  fact that some of its leaves may be labelled with instances of the induction
  schemes. Thus, a $\CHA$-pre-proof $\pi'$ of $\Gamma \sdash \delta$ may be obtained by
  `grafting' copies of the $\CHA$-proof given above onto each such leaf of
  $\pi$. As there are no cycles in $\pi$, every infinite branch of $\pi'$
  must follow the $\star$-cycle of one of the `grafted on' induction scheme
  proofs and thus has a successful trace. Hence $\pi'$ is a $\CHA$-proof
  witnessing $\CHA \vdash \Gamma \sdash \delta$.
\end{proof}

\section{Stack-labelled Heyting arithmetic}
\label{sec:sha}

Roughly, the translation of $\CHA$-proofs into $\HA$-proofs can be split into
two steps: First $\CHA$-proofs are arranged into a normal form, codified by an intermediate proof system
$\AHA$. As the second step, $\AHA$-proofs are translated into $\HA$ proofs.
All `logical content' of the translation is limited to the second step, the
first step being pure combinatorics. We thus begin by presenting the second
step of the translation, deferring the combinatorial arguments to
\Cref{sec:combinatorics}. This section introduces the system $\AHA$ of
\emph{stack-labelled Heyting arithmetic}. In \Cref{sec:translation}, we present
the translation of $\AHA$-proofs to $\HA$-proofs.

The sequents of $\AHA$ are expressions $\Lambda \mid \Gamma \sdash \delta$,
consisting of an $\HA$ sequent $\Gamma \sdash \delta$ and an ordered list
$\Lambda$ of \emph{companion labels}, which is treated as a stack. We denote the
empty list by $\varepsilon$. Intuitively,
the companion labels in $\Lambda$ collect the sequents of \emph{companions} to
which one may `cycle back' at the current point of the proof.
Formally, a \emph{companion label} is an expression of the shape $x^\bullet \mapsto \Gamma
\sdash \delta$ for a $\HA$-sequent $\Gamma \sdash \delta$, a variable $x \in
\FV(\Gamma, \delta)$ and where $x^\bullet \in \{x^+, x^-\}$. We write
$\var(\Lambda)$ for the set $\{x \mid (x^\bullet \mapsto \Gamma \sdash \delta)
\in \Lambda\}$ of variables with associated companion labels in $\Lambda$.
Returning to the intuitive reading of companion labels, $x^+
\mapsto \Gamma \sdash \delta$ in $\Lambda$ indicates that the current point of
the proof may `cycle back' to a companion node with $\HA$-sequent $\Gamma \sdash
\delta$ because the companion's associated variable $x$ has been subject to a
$\RCase$-rule between the companion and the current point in the proof.
Conversely, a companion label $x^- \mapsto \Gamma \sdash \delta$ indicates that
$x$ has not yet been subject to $\RCase$ and thus no cycle may be formed yet.

A $\AHA$-sequent $\Lambda \mid \Gamma \sdash \delta$ is \emph{well-formed} if
for every $x^\bullet \mapsto \Delta \sdash \gamma \in \Lambda$ we have $x \in
\FV(\Gamma, \delta)$. Going forward, we assume that every $\AHA$-sequent
$\Lambda \mid \Gamma \sdash \delta$ under consideration is well-formed, unless
stated otherwise.

Before presenting the rules of the calculus \( \AHA \), we fix further
operations on stacks of companion labels. Given a stack $\Lambda$ and $n \in
\omega$, we define $\Lambda \uh n$ to be the longest prefix $\Lambda'$ of
$\Lambda$ with $\abs{\Lambda'} \leq n$. Given a
variable \( x \), we define $\Lambda^{+x}$ to be the result of replacing in
$\Lambda$ all occurrences of $x^-$ by $x^+$:
\[
  \varepsilon^{+x} \coloneq \varepsilon
  \qquad
  (y^\bullet \mapsto \Gamma \sdash \delta; \Lambda)^{+x} \coloneq
  \begin{cases}
     x^- \mapsto \Gamma \sdash \delta; \Lambda^{+x}, &\text{if \( y = x \),}
    \\
    y^\bullet \mapsto \Gamma \sdash \delta; \Lambda^{+x} , &\text{otherwise.}
  \end{cases}
\]

\begin{definition}
  The \emph{derivation rules} of $\AHA$ are of split into three categories:
  \begin{description}
	\item [Logical rules,] which do not interact with companion labels:
  \begin{mathpar}
    \inference[\RAx]{}{\varepsilon \mid \Gamma, \delta \sdash \delta}

    \inference[$\to$L]{\Lambda \mid \Gamma{}, \varphi{} \sdash{} \delta{} \quad \Lambda \mid \Gamma{} \sdash{}
      \psi{}}{\Lambda \mid \Gamma{}, \varphi{} \to \psi{} \sdash{} \delta}

    \inference[$\to$R]{\Lambda \mid \Gamma{}, \varphi{} \sdash{} \psi{}}{\Lambda \mid \Gamma{} \sdash{} \varphi{} \to \psi{}}
    
    \\

    \inference[$\wedge$L]{\Lambda \mid \Gamma{}, \varphi{}, \psi{} \sdash{} \delta{}}{\Lambda \mid \Gamma{},
      \varphi{} \wedge{} \psi{} \sdash \delta{}}

    \inference[$\wedge$R]{\Lambda \mid \Gamma \sdash \varphi \quad \Lambda \mid \Gamma \sdash \psi}{\Lambda \mid \Gamma \sdash \varphi \wedge
      \psi}

    \inference[$\vee$L]{\Lambda \mid \Gamma, \varphi \sdash \delta \quad \Lambda \mid \Gamma, \psi \sdash \delta}{\Lambda \mid \Gamma, \varphi \vee{}
      \psi \sdash \delta}

    \inference[$\vee$R]{\Lambda \mid \Gamma{} \sdash \varphi_i}{\Lambda \mid \Gamma{} \sdash
      \varphi_1 \vee{} \varphi_2}
    
    \\

    \inference[$\forall$L]{\Lambda \mid \Gamma, \varphi[t / x] \sdash \delta}{\Lambda \mid \Gamma, \forall x.\varphi \sdash \delta}

    \inference[$\forall$R]{\Lambda \mid \Gamma \sdash \varphi \quad x \not\in
      \FV(\Lambda \mid \Gamma, \forall x. \varphi)}{\Lambda \mid \Gamma
      \sdash \forall x.\varphi}

    \inference[$\exists$L]{\Lambda \mid \Gamma, \varphi \sdash \delta \quad x \not\in
      \FV(\Lambda \mid \Gamma, \exists x. \varphi, \delta)}{\Lambda \mid \Gamma, \exists x. \varphi \sdash \delta}

    \inference[$\exists$R]{\Lambda \mid \Gamma \sdash \varphi[t / x]}{\Lambda \mid \Gamma \sdash \exists x. \varphi}

    \inference[$=$L]{\Lambda \mid \Gamma[x / y] \sdash \delta[x / y] \quad x, y \not\in \FV(s, t)}{\Lambda \mid \Gamma[s / x, t / y], s = t \sdash \delta[s / x, t / y]}

    \inference[$=$R]{}{\Lambda \mid \Gamma \sdash t = t}
    
	\inference[$\bot$L]{}{\Lambda \mid \Gamma, \bot \sdash \delta}

  \inference[\RWk]{\Lambda \mid \Gamma \sdash \delta}{\Lambda \mid \Gamma, \Gamma' \sdash \delta}

  \inference[\RCut]{\Lambda \mid \Gamma \sdash \varphi \quad \Lambda \mid \Gamma, \varphi \sdash \delta}{\Lambda \mid \Gamma \sdash \delta}
  \end{mathpar}
  \item [Arithmetical axioms] of the form:
  \begin{align*}
    \varepsilon \mid \Gamma &\sdash 0 \neq S t & \varepsilon \mid \Gamma , S s = S t & \sdash s = t & \varepsilon \mid \Gamma &\sdash s + 0 = 0 \\
    \varepsilon \mid \Gamma &\sdash s + St = S(s + t) & \varepsilon \mid \Gamma &\sdash s \cdot 0 = 0 &\varepsilon \mid \Gamma &\sdash s \cdot St = (s \cdot t) + s
  \end{align*}
  \item [Label rules] which manipulate companion labels:
  \begin{mathpar}
    \inference[$\RComp$]{\Lambda; (x^- \mapsto \Gamma \sdash \delta) \mid \Gamma \sdash \delta}{\Lambda \mid \Gamma \sdash \delta} 

    \inference[$\RBud$]{}{\Lambda; (x^+ \mapsto \Gamma \sdash \delta) \mid \Gamma \sdash \delta}

    \inference[$\RDrop$]{\Lambda \mid \Gamma \sdash \delta}{\Lambda; \Lambda' \mid \Gamma \sdash \delta}

    \inference[$\RCase_{x}$]{\Lambda \mid \Gamma(0) \sdash \delta(0)
      \qquad \Lambda^{+x} \mid \Gamma(S\,x) \sdash \delta(S\,x)}{\Lambda \mid \Gamma(x) \sdash \delta(x)}
  \end{mathpar}
\end{description}

  We only consider finite, non-cyclic derivations in $\AHA$ to be proofs. That is,
  a proof in $\AHA$ is
  a pair $\pi = (T, \rho)$ consisting of a finite tree $T$ and a function
  $\rho : \Nds(T) \mapsto \RR$, assigning instances of $\AHA$-derivation rules to nodes of
  $T$. If $t \in \Nds(T)$ and $\rho(t)$ is a rule with $n$ premises $\Lambda_1
  \mid \Gamma_1 \sdash
  \delta_1, \ldots, \Lambda_n \mid \Gamma_n \sdash \delta_n$ then $\Chld(t)$ must be $t_1,
  \ldots, t_n$ such that the conclusion of the $\rho(t_i)$ is $\Lambda_i \mid \Gamma_i \sdash
  \delta_i$. We consider axiomatic sequents as premiseless derivation rules.
  Every rule but $\RDrop$
  may only applied to derive sequents well-formed sequents.
  We call the conclusion of the rule labelling the root of $T$ the \emph{endsequent}.
  If such a
  proof with endsequent $\varepsilon \mid \Gamma \sdash \delta$
  exists then $\Gamma \sdash \delta$ is \emph{provable in stack-controlled Heyting arithmetic},
  denoted $\AHA \vdash \Gamma \sdash \delta$.
\end{definition}

Formally, $\AHA$ is not a cyclic proof system because its derivations are
non-cyclic trees. However, through the $\RComp$- and $\RBud$-rules, it retains a
`cyclic character'. In this sense, it can be considered as a mid-point between
the fully cyclic system $\CHA$ and the fully non-cyclic system $\HA$.

The exception stipulating that $\RDrop$ may derive ill-formed sequents is in
place to cover for a situation which may arise in $\RCase$-applications.
Suppose $\Lambda = \Lambda_0 ; (x^\bullet \mapsto \Delta \sdash
\gamma); \Lambda_1$ where $x \not\in \var(\Lambda_0)$.
Consider the following application of
$\RCase_x$:
\[
    \inference[$\RCase_{x}$]{\Lambda \mid \Gamma(0) \sdash \delta(0)
      \qquad \Lambda^{+x} \mid \Gamma(S\,x) \sdash \delta(S\,x)}{\Lambda \mid \Gamma(x) \sdash \delta(x)}
\]
Then $x \not\in \FV(\Gamma(0), \delta(0))$ in the left-hand premise, but
$\Lambda$ still contains the label $x^\bullet \mapsto \Delta \sdash \gamma$,
making the left-hand premise an ill-formed $\AHA$-sequent. This may be rectified
by an application of the $\RDrop$-rule, reducing $\Lambda$ to $\Lambda_0$,
yielding a well-formed sequent, at which point other derivation rules may be
applied again.

The following key theorem is proven in \Cref{sec:combinatorics}:

\begin{restatable}{theorem}{chatoaha}\label{lem:cha-to-aha}
  If $\CHA \vdash \Gamma \sdash \delta$ then $\AHA \vdash \Gamma \sdash \delta$.
\end{restatable}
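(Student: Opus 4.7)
The plan is to unfold the cyclic $\CHA$-proof $\pi$ of $\Gamma \sdash \delta$ into a finite, tree-shaped $\AHA$-derivation with endsequent $\varepsilon \mid \Gamma \sdash \delta$. The strategy is to traverse $\pi$ from the root downwards, copying its rule applications into the $\AHA$-derivation while maintaining a stack of companion labels that mimics the cyclic structure. Whenever the traversal enters a node of $\pi$ that is a companion for one or more buds below, I apply $\RComp$ to push a label $x^- \mapsto \Gamma' \sdash \delta'$ carrying a choice of trace variable $x$; instances of $\RCase_y$ in $\pi$ correspond to $\AHA$-applications of $\RCase_y$, whose side-effect is precisely to flip $y^-$ to $y^+$ on the stack. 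Upon reaching a bud in $\pi$ I close the branch with $\RBud$, which is possible exactly when the companion's label sits on top of the stack in $+$-form; $\RDrop$ is inserted whenever labels drop out of scope, in particular on the branch of a $\RCase_x$ corresponding to the case $x = 0$, where $x$ is no longer free.

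Two complications arise. First, the companion/bud structure of $\pi$ need not be \emph{well-nested}: along a single branch, buds of one companion may be interleaved with those of another, whereas the $\AHA$ stack is strictly LIFO. The unfolding must therefore take several copies of parts of $\pi$, one for each coherent linear ordering of the relevant companion labels, using $\RDrop$ to shed labels whose scope has been exited. Second, the global trace condition of $\CHA$ only promises that every infinite branch carries \emph{some} progressing variable, and may witness different variables on different branches; since each $\RComp$ commits to a single variable, we need a per-cycle choice of variable that is uniform enough for every bud encountered during the unfolding to be closeable.

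The main obstacle is exactly this combinatorial bookkeeping, which I expect the proof in \Cref{sec:combinatorics} to handle via a Ramsey- or B\"uchi-style argument applied to the tree of candidate unfoldings. Concretely, the trace condition forces any infinite path in such an unfolding to eventually exhibit a recurrence of the node-plus-top-of-stack state together with an intervening $\RCase_x$ that has placed the topmost label into $+$-form; at that recurrence $\RBud$ fires and closes the branch. Applying K\"onig's lemma to the set of reachable (node, stack) states then yields a finite $\AHA$-derivation. Preservation of the endsequent $\varepsilon \mid \Gamma \sdash \delta$ is automatic, since the unfolding starts at the root of $\pi$ with empty stack and stack labels are only ever pushed and popped strictly within the derivation.
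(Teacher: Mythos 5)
Your high-level picture matches the paper's: unfold the cyclic proof, push a label with \textsc{Comp} at each companion, flip $-$ to $+$ at \textsc{Case}, close with \textsc{Bud}, and shed out-of-scope labels with \textsc{Pop}. You also correctly identify the two genuine obstacles: the non-LIFO interleaving of cycles, and the fact that the global trace condition only supplies a progressing variable per infinite branch rather than a coherent choice per bud. But your resolution of these obstacles is a conjecture, not an argument, and the recurrence claim at its core does not hold as stated. The issue is that on an infinite branch the progressing variable $x$ may be the one associated with a companion whose label sits \emph{below} other labels on the stack; \textsc{Bud} only fires when the matching label is on \emph{top} and marked $+$, so ``node plus top-of-stack recurs with an intervening $\RCase_x$'' is exactly what can fail when two cycles interleave with different progressing variables. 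König's lemma then gives you nothing, because you have not shown every branch of the candidate unfolding terminates.

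The paper fills this gap with two pieces of machinery you would need to supply. First, \Cref{lem:exists-io} upgrades the per-branch trace condition to an \emph{induction order}: a preorder $\preceq$ on buds together with a single variable $x_s$ per bud, chosen by iteratively peeling maximal strongly connected components so that every strongly connected set of buds has a $\preceq$-maximum whose variable is preserved throughout and progresses on its own cycle. This is the ``uniform enough'' choice you ask for, and it requires the single-successor nature of $\CHA$'s traces. Second, \Cref{thm:ind-order-SD} (Sprenger--Dam) unfolds the proof until the induction order coincides with the structural dependency order $\sqsubseteq^*$; only after this normalization (plus injectivity of $\beta$) does \Cref{lem:alpha} show that the sets $S(s)$ of in-scope buds are linearly ordered by $<_\beta$ and grow and shrink like a stack, so that the verbatim rule-by-rule translation with $\RComp$/$\RBud$/$\RDrop$ insertions goes through. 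Your ``several copies, one for each coherent linear ordering'' gestures at this unfolding, but proving that some coherent ordering always exists and is respected along every branch is precisely the content of these lemmas, and it is where the real work lies.
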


\section{Translating from $\AHA$ to $\HA$}
\label{sec:translation}

This section describes a translation of $\AHA$-proofs into $\HA$-proofs.
The general strategy is to transform
each $\AHA$-derivation $\pi$ of $\Lambda \mid \Gamma \sdash \delta$ into a
$\HA$-derivation of $\HH_\Lambda(\pi), \Gamma \sdash \delta$ where
$\HH_\Lambda(\pi)$ is a formula `computed from the data of $\Lambda$ and $\pi$'
called the \emph{induction invariant}. While this general strategy is similar to
the approach of Sprenger and
Dam~\cite{sprengerStructureInductiveReasoning2003a}, some of the details
differ: Their method would yield a sequent $\HH_\Lambda, \Gamma \sdash \delta$
where $\HH_\Lambda$ is a set of inductive hypotheses `computed from the data of
$\Lambda$'. By also taking into account the shape of $\pi$ in computing
$\HH_\Lambda(\pi)$, we are able to combine the inductive hypotheses into a
single formula, which allows us to control the inductive complexity of the
resulting $\HA$-proof, mirroring a result obtained by
Das~\cite{dasLogicalComplexityCyclic2020} for cyclic $\PA$.

The construction of the inductive hypothesis $\HH_\Lambda(\pi)$ can be split
into multiple steps. First, we define $I_\Lambda$, which should be considered
the `inductive hypothesis of the companion label on the top of the stack $\Lambda$'.
Thus, let $\Lambda$ be non-empty, i.e.\ $\Lambda \coloneq \Lambda'; x^\bullet \mapsto \Gamma \sdash \delta$.
We separate the free variables of $\Gamma \sdash \delta$, other than $x$, into
$\vec{v} = \var(\Lambda) \setminus \{x\}$ and
$\vec{w} = \FV(\Gamma, \delta) \setminus (\var(\Lambda) \cup \{x\})$. Then
define
\[
  I_\Lambda \coloneq
  \begin{cases}
    \forall x' \,<\, x.~\widehat{I}_\Lambda [x' / x] & \text{ if } \bullet = - \\
    \forall x' \,\leq\, x.~\widehat{I}_\Lambda [x' / x] & \text{ if } \bullet = +
  \end{cases} \quad \text{ where } \quad
  \widehat{I}_\Lambda \coloneq \forall \vec{u} \leq \vec{v}. \forall \vec{w}.~(\bigwedge
  \Gamma \to \delta)[\vec{u} / \vec{v}].
\]
\begin{proposition}\label{lem:end-lem}
  \
  \begin{enumerate}[(i)]
  \item $\FV(I_\Lambda) = \FV(\widehat{I}_\Lambda) = \var(\Lambda)$
  \item $\HA \vdash I_\Lambda, x \leq y \sdash I_\Lambda[x / y]$ for any
    $\Lambda$ and $x,y \in \var$
  \item $\HA \vdash I_\Lambda[Sx / x] \sdash I_{\Lambda^{+x}}$ for $x \in \FV(I_\Lambda)$
  \end{enumerate}
\end{proposition}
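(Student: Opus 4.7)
The three items are routine verifications, and my plan is to prove each by case analysis on the role played by the substituted variable in $\Lambda$. Throughout, write $\Lambda = \Lambda_0; z^\bullet \mapsto \Gamma \sdash \delta$ so that the top variable $z$ of $\Lambda$ is distinguished from the external $x, y$ of (ii) and (iii). At the outset, observe that $\cdot^{+x}$ neither adds nor removes variables from the stack, so $\widehat{I}_{\Lambda^{+x}} = \widehat{I}_\Lambda$ always, and $I_{\Lambda^{+x}} = I_\Lambda$ whenever $x \neq z$ (since $I$ inspects only the top label).

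Item (i) follows by tracking free variables through the nested quantifiers. The well-formedness of $\Lambda$ together with the definitions of $\vec{v}, \vec{w}$ give $\FV(\Gamma, \delta) = \{z\} \cup \vec{v} \cup \vec{w}$. The substitution $[\vec{u}/\vec{v}]$ removes $\vec{v}$ from the matrix $\bigwedge \Gamma \to \delta$ but leaves $\vec{v}$ free as the bounds of $\forall \vec{u} \leq \vec{v}$; binding $\vec{u}$ and $\vec{w}$ then gives $\FV(\widehat{I}_\Lambda) = \{z\} \cup \vec{v} = \var(\Lambda)$. The outer bounded quantifier in $I_\Lambda$ binds $z'$ but retains $z$ in its bound, yielding $\FV(I_\Lambda) = \var(\Lambda)$ too.

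For (ii), I split on $y$. If $y \notin \var(\Lambda)$, then the substitution is vacuous by (i). If $y = z$, then $I_\Lambda[x/y]$ differs from $I_\Lambda$ only in the outer bounded quantifier, where the bound on $z'$ is changed from $z$ to $x$. If $y = v_j \in \vec{v}$, then $[x/y]$ instead tightens the internal bound $u_j \leq y$ inside $\widehat{I}_\Lambda$ to $u_j \leq x$. In both nontrivial cases, the premise $x \leq y$ ensures that the new quantifier range is contained in the old, so $I_\Lambda$ implies $I_\Lambda[x/y]$. The $\HA$-derivation uses the bounded quantifier rules of \Cref{lem:rind} together with transitivity of $\leq$.

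For (iii), I split again on whether $x$ is the top variable. If $x \neq z$, then $x \in \vec{v}$ (since $x \in \FV(I_\Lambda)$) and $I_{\Lambda^{+x}} = I_\Lambda$ by the preliminary observation, reducing the claim to $I_\Lambda[Sx/x] \sdash I_\Lambda$; this holds because $[Sx/x]$ only loosens the internal bound $u_j \leq x$ to $u_j \leq Sx$, and $u_j \leq x$ implies $u_j \leq Sx$. If $x = z$, then $\Lambda^{+z}$ has top polarity $+$ regardless of the original. When the original polarity was $-$, the formulas $I_\Lambda[Sz/z] = \forall z'.\,z' < Sz \to \widehat{I}_\Lambda[z'/z]$ and $I_{\Lambda^{+z}} = \forall z'.\,z' \leq z \to \widehat{I}_\Lambda[z'/z]$ are literally equal by the abbreviation $z' \leq z \coloneq z' < Sz$. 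When it was originally $+$, $I_{\Lambda^{+z}} = I_\Lambda$ and $I_\Lambda[Sz/z]$ loosens the outer bound $z' \leq z$ to $z' \leq Sz$, so the implication is direct. The only real obstacle throughout is the bookkeeping of which occurrences of $z$ and $\vec{v}$ in the nested definition of $I_\Lambda$ are affected by each substitution; once the cases are enumerated, each $\HA$-derivation is a short application of bounded quantifier rules and elementary arithmetic.
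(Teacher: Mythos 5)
The paper states this proposition without proof, treating it as routine, and your case analysis on the role of the substituted variable (top label of the stack versus a member of $\vec{v}$ versus absent from $\var(\Lambda)$) supplies exactly the verification being elided; every case checks out, including the preliminary observations that $\widehat{I}_{\Lambda^{+x}} = \widehat{I}_\Lambda$ and that all free occurrences of $\var(\Lambda)$ in $I_\Lambda$ sit in quantifier bounds, which is what reduces each item to monotonicity of bounded quantification under $\leq$. Your remark that in the $-$-polarity case of (iii) the two formulas coincide literally because $z' \leq z$ abbreviates $z' < Sz$ is a correct and pleasant shortcut, so there are no gaps.
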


The inductive hypothesis $\HH_\Lambda(\pi)$ is defined in terms of the recursive
function $H_n(\pi)$ which calculates the inductive hypothesis for a derivation
$\pi \vdash \Lambda \mid \Gamma \sdash \Delta$ where $n \leq \abs{\Lambda}$.
Intuitively, $H_n(\pi)$ computes `the inductive hypothesis for the first $n$
labels of $\Lambda$ along $\pi$'. We thus fix $\HH_\Lambda(\pi) \coloneq H_{\abs{\Lambda}}(\pi)$.
\newsavebox{\budpt}
\begin{lrbox}{\budpt}
  \begin{varwidth}{\linewidth}
    \begin{comfproof}
      \AXC{}
      \LSC{$\RBud$}
      \UIC{$\Lambda; (x^+ \mapsto \Gamma \sdash \delta) \mid \Gamma \sdash \delta$}
    \end{comfproof}
  \end{varwidth}
\end{lrbox}
\newsavebox{\droppt}
\begin{lrbox}{\droppt}
  \begin{varwidth}{\linewidth}
    \begin{comfproof}
      \AXC{$\pi'$}
      \LSC{$\RDrop$}
      \UIC{$\Lambda; \Lambda' \mid \Gamma \sdash \delta$}
    \end{comfproof}
  \end{varwidth}
\end{lrbox}
\newsavebox{\casept}
\begin{lrbox}{\casept}
  \begin{varwidth}{\linewidth}
    \begin{comfproof}
      \AXC{$\pi_0$}
      \AXC{$\pi_s$}
      \LSC{$\RCase_x$}
      \BIC{$\Lambda \mid \Gamma \sdash \delta$} 
    \end{comfproof}
  \end{varwidth}
\end{lrbox}
\newsavebox{\unpt}
\begin{lrbox}{\unpt}
  \begin{varwidth}{\linewidth}
    \begin{comfproof}
      \AXC{$\pi'$}
      \LSC{\textsc{Un}}
      \UIC{$\Lambda \mid \Gamma \sdash \delta$}
    \end{comfproof}
  \end{varwidth}
\end{lrbox}
\newsavebox{\binpt}
\begin{lrbox}{\binpt}
  \begin{varwidth}{\linewidth}
    \begin{comfproof}
      \AXC{$\pi_l$}
      \AXC{$\pi_r$}
      \LSC{\textsc{Bin}}
      \BIC{$\Lambda \mid \Gamma \sdash \delta$}
    \end{comfproof}
  \end{varwidth}
\end{lrbox}
\begin{align*}
  H_0(\pi) & \coloneq \top \\
  H_n\left(
    \hspace{-0.5em}\usebox{\budpt}
  \right) & 
      \coloneq
                                                           \begin{cases}
                                                             I_{\Lambda}
                                                             & \text{ if } \abs{\Lambda} = n \\
                                                             \top & \text{ otherwise}
                                                           \end{cases} \\
  H_n \left( 
    \hspace{-0.6em}\usebox{\droppt}
  \right) & 
   \coloneq
                                      H_{\min(\abs{\Lambda}, n)}(\pi)
                                    \\
  H_{n} \left(
    \hspace{-0.6em}\usebox{\unpt}
  \right) & \coloneq H_{n}(\pi) \\
  H_{n}\left(
    \hspace{-0.6em}\usebox{\binpt}
  \right)& \coloneq H_{n}(\pi_l) \wedge H_{n}(\pi_r) \\
  H_n \left(
    \hspace{-0.6em}\usebox{\casept}
  \right) &
                                 \coloneq \begin{cases}
                                   \begin{aligned}
    \forall x' & \leq x.~(x' = 0 \wedge H_{n}(\pi_0)[x' / x]) \vee \\ & (\exists x''.~x' = Sx'' \wedge H_{n}(\pi_s)[x'' / x]) 
                                   \end{aligned}
                                   & \text{ if } x \in \var(\Lambda \uh n) \\
                                   H_{n}(\pi_0) \wedge H_{n}(\pi_s) & \text{ otherwise}
                                 \end{cases}
\end{align*}
The clauses $\textsc{Un}$ and $\textsc{Bin}$ are catch-all for unary and binary
rules not covered by the other clauses.

Intuitively, for a proof $\pi \vdash \Lambda \mid \Gamma \sdash \delta$, the
partial inductive hypothesis $H_n(\pi)$ accounts for all instances of the invariants $x
\mapsto \Gamma' \sdash \delta' \in \Lambda \uh n$ needed to `close the proof'. They are collected by
recursively traversing the proof tree $\pi$. For example in the $\RBin$-case, the instances needed
to close the whole proof are precisely those needed to close the left and the right
premise, hence their conjunction is chosen.

The most complicated clause of $H_n(\pi)$ is that of the $\RCase$-rule. Observe
that it is an embodiment of the theorem $\forall y.~y = 0 \vee \exists
y'.~y = Sy'$ of arithmetic. The variable $x$ in the conclusion of $\pi_S$ thus
corresponds to $y'$ in the theorem while that same $x$ corresponds to $y$ in the
conclusion of $\RCase$. The complicated case of the $\RCase$-clause, if $x \in
\var(\Lambda \uh n)$ and thus possibly $x \in \FV(H_n(\pi))$, accounts for this
change of reference. However, this alone does not explain the structure of
$H_n(\pi)$ in this case. There are two more considerations which
influence it: First, it is important that the sequent $H_n(\pi), x \leq y \sdash
H_n(\pi)[x / y]$ is provable in $\HA$ (\Cref{lem:ih-lem} (ii) below), for which
the `downwards closing' with the prefix of $\forall x' \leq x.$ is required.
Second, we have designed the $\RCase$-clause such that the resulting $H_n(\pi)$
embodies the aforementioned case-distinction theorem. In the translation of a
$\AHA$-proof to a $\HA$-proof, this will allow us to translate every application of
a $\RCase$-rule to a variable in $\var(\Lambda)$ as an `interaction' with
$\HH_\Lambda(\pi)$ in a straight-forward manner.

\begin{lemma}\label{lem:ih-lem}
  \
  \begin{enumerate}[(i)]
  \item $\FV(H_n(\pi \vdash \Lambda \mid \Gamma \sdash \delta)) \subseteq \var(\Lambda \uh n)$
  \item $\HA \vdash H_n(\pi), x \leq y \sdash H_n(\pi)[x / y]$ for any $\pi$,
    $n$ and $x, y \in \var$.
  \item Let $\abs{\Lambda} > n$ then $\HA \vdash H_{n}(\pi), I_{\Lambda \uh n +
      1} \sdash H_{n + 1}(\pi)$.
  \end{enumerate}
\end{lemma}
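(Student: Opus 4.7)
The plan is to prove all three parts simultaneously by structural induction on $\pi$, following the recursive definition of $H_n$ clause-by-clause. The base cases (logical and arithmetical axioms) force $\Lambda = \varepsilon$ and hence $n = 0$ with $H_n(\pi) = \top$, making every claim trivial. The $\RUn$, $\RBin$ and $\RDrop$ clauses dispatch directly to the inductive hypothesis, the last after a small case analysis comparing $\min(\abs{\Lambda}, n)$ and $\min(\abs{\Lambda}, n+1)$ with $n$ and $n+1$.

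For (i), the substantial work lies in the $\RBud$ case, where $H_n(\pi) = I_\Lambda$ when $\abs{\Lambda} = n$ and \Cref{lem:end-lem}(i) supplies $\FV(I_\Lambda) = \var(\Lambda)$, and in the $\RCase_x$ clause with $x \in \var(\Lambda \uh n)$, where the variables $x', x''$ are bound in the defining formula and $\var(\Lambda) = \var(\Lambda^{+x})$ lets the inductive hypothesis on both premises apply. For (ii), the $\RBud$ case reduces directly to \Cref{lem:end-lem}(ii); in the $\RCase_x$ case, one distinguishes whether the substituted variable coincides with the case variable (handled by transitivity of $\leq$) or not (in which case substitutions commute with the quantifier prefix, and the inductive hypothesis applies directly).

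Part (iii) is where the main work lies, and the argument hinges on the $\RCase_x$ case, which splits into three subcases according to whether $x \in \var(\Lambda \uh n)$ and whether $x \in \var(\Lambda \uh n+1)$. The subcase $x \notin \var(\Lambda \uh n+1)$ is immediate, as both $H_n(\pi)$ and $H_{n+1}(\pi)$ take the plain conjunction form. The subcase $x \in \var(\Lambda \uh n)$ is technically involved but conceptually routine: both $H_n$ and $H_{n+1}$ share the bounded-quantifier form, and one invokes the inductive hypotheses on $\pi_0$ and $\pi_s$ under the substitutions $[x'/x]$ and $[x''/x]$, combined with \Cref{lem:end-lem}(ii) and (iii) to mediate between the stacks $\Lambda \uh n+1$ and $\Lambda^{+x} \uh n+1$.

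The main obstacle is the middle subcase, where $x$ is freshly introduced at position $n+1$ in $\Lambda$, so that $H_n(\pi) = H_n(\pi_0) \wedge H_n(\pi_s)$ while $H_{n+1}(\pi)$ carries the full $\forall x' \leq x.\ldots$ case-distinction structure. Here the plan is to fix an arbitrary $x' \leq x$ and dispatch on $x' = 0$ versus $x' = S x''$ via the admissible $\RCase$-rule from \Cref{lem:rind}. In the branch $x' = 0$, downward monotonicity (\Cref{lem:end-lem}(ii)) gives $I_{\Lambda \uh n+1}[0/x]$, to which the inductive hypothesis on $\pi_0$ substituted by $[0/x]$ applies; $H_n(\pi_0)$ is unaffected by the substitution since $x \notin \FV(H_n(\pi_0)) \subseteq \var(\Lambda \uh n)$ by part (i). In the branch $x' = S x''$, the same monotonicity produces $I_{\Lambda \uh n+1}[S x''/x]$, \Cref{lem:end-lem}(iii) converts this to $I_{\Lambda^{+x} \uh n+1}[x''/x]$, and the inductive hypothesis on $\pi_s$ substituted by $[x''/x]$ closes the branch. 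The main technical burden throughout is bookkeeping these substitutions and ensuring that $(\Lambda \uh n+1)^{+x}$ and $\Lambda^{+x} \uh n+1$ are identified.
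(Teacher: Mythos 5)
Your proposal is correct and follows essentially the same route as the paper: induction on $\pi$ with the work concentrated in the $\RBud$, $\RDrop$ and $\RCase$ clauses, the same three-way split in part (iii) according to whether $x \in \var(\Lambda \uh n)$ or $x \in \var(\Lambda \uh n+1) \setminus \var(\Lambda \uh n)$, and the same use of \Cref{lem:end-lem}(ii)--(iii) together with the identification of $(\Lambda \uh n+1)^{+x}$ with $\Lambda^{+x} \uh n+1$. The only cosmetic difference is that you run one simultaneous induction and, in the middle subcase, invoke the inductive hypothesis on $\pi_0$ where the paper instead observes directly that the forced $\RDrop$ above the zero premise makes $H_n(\pi_0) = H_{n+1}(\pi_0)$; both work.
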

\begin{proof}
  \
  \begin{enumerate}[(i)]
  \item Follows immediately from \Cref{lem:end-lem} (i) and by scrutinising the
    definition of $H_n(-)$.
  \item Proceed by induction on $\pi$. Only 3 cases are of interest because they
    influence the free variables of $H_\bullet(\pi)$:
    \begin{description}
    \item[Bud:] Then $H_n(\pi) = \top$ or $H_n(\pi) = I_{\Lambda}$ for some
      $\Lambda$. In the latter case, the claim follows directly
      from \Cref{lem:end-lem} (ii).
    \item[Pop:] It might be the case that $y \not\in H_n(\pi)$ because the
      corresponding label is erased by $\RDrop$. In this case, simply close by
      $\RAx$. Otherwise continue via the IH.
    \item[$\RCase_z$:] The only interesting case is if $z = y$. In this case,
      \[
        H_n(\pi) = \forall y' \leq y.~\underbrace{(y' = 0 \wedge H_n(\pi_0)[y' / y]) \vee
        (\exists y''.~y' = sy'' \wedge H_n(\pi_s)[y''/ y])}_{\varphi }.
      \]
      Then simply derive the following:
      \begin{comfproof}
        \AXC{}
        \LSC{$\RAx$}
        \UIC{$A, y' \leq y \sdash A$}
        \LSC{$\forall$L*}
        \UIC{$\forall y' \leq y. A, y' \leq y \sdash A$}
        \UIC{$\forall y' \leq y. A, x \leq y, y' \leq x \sdash A$}
        \LSC{$\forall$R*}
        \UIC{$\forall y' \leq y. A, x \leq y \sdash \forall y' \leq x. A$}
      \end{comfproof}
    \end{description}
  \item 
  Per induction on $\pi$.
  \begin{description}
  \item[Bud:] If $\abs{\Lambda} \neq n + 1$, this is trivial as then
    $H_{n + 1}(\pi) = \top$. If $\abs{\Lambda} = n + 1$ then
    $H_{n + 1} = I_{\Lambda \uh n + 1}$ which follows immediately.
  \item[Pop:] The only interesting case is if $\Lambda$ is partitioned into $\Lambda_1 ; \Lambda_2$ with $\Lambda_2$
    being discarded by the rule and $\abs{\Lambda_1} < n + 1$. In
    this case, $H_{n}(\pi) = H_{\abs{\Lambda_1}}(\pi') =
    H_{n + 1}(\pi)$ and the proof can be closed with $\RAx$.
  \item[$\RCase_y$:] The case in which $y \not\in \var(\Lambda \uh n + 1)$ can be
    treated as the generic binary case. Thus assume $y \in \var(\Lambda \uh n + 1)$. Then
    $H_{n + 1}(\pi) = \forall y' \leq y.~L \vee R$ with $L =
    y' = 0 \wedge H_{n + 1}(\pi_0)[y' / y]$ and $R = \exists y''.~y' = sy''
    \wedge H_{n + 1}(\pi_S)[y'' / y]$. Write $I \coloneq I_{\Lambda \uh n + 1}$
    and $I^+ \coloneq I_{(\Lambda \uh n + 1)^{+y}}$.
    We must distinguish two cases:
    \begin{description}
    \item[$y \in \var(\Lambda \uh n)$:] Let \( \psi_0 \coloneq y' = 0 \wedge H_{n}(\pi_0)[y' / y] \) and \( \psi_s \coloneq \exists y''.~y' = sy'' \wedge H_{n}(\pi_s)[y'' / y] \). We derive:
      \begin{scprooftree}{1}
        \AXC{$\pi_L$}
        \UIC{$\psi_L, I[y' / y] \sdash L \vee R$}

        \AXC{$\pi_R$}
        \UIC{$\psi_s, I[y' / y] \sdash L \vee R$}

        \LSC{$\vee$L}
        \BIC{$\psi_0 \vee \psi_s , I[y' / y] \sdash L \vee R$}
        \LSC{\Cref{lem:end-lem} (ii), $\RWk$}
        \UIC{$\psi_0 \vee \psi_s , I,  y' \leq y \sdash L \vee R$}
        \LSC{$\forall$L*}
        \UIC{$\forall y' \leq y.~\psi_0 \vee \psi_s , I, y' \leq y \sdash L \vee R$}
        \LSC{$\forall$R*}
        \UIC{$\forall y' \leq y.~\psi_0 \vee \psi_s , I \sdash \forall y' \leq y. L \vee R$}
      \end{scprooftree}
      \vspace{0.5em}
      where \( \pi_L \) and \( \pi_R \) are, respectively, the proofs
      \begin{scprooftree}{1}
        \AXC{$y' = 0 \sdash y' = 0 $}
        \AXC{IH of $\pi_0$}
        \UIC{$H_{n}(\pi_0), I \sdash H_{n + 1}(\pi_0)$}
        \LSC{\Cref{lem:ha-ren}}
        \UIC{$H_{n}(\pi_0)[y' / y], I[y' / y] \sdash H_{n + 1}(\pi_0)[y' / y]$}
        \LSC{$\wedge$R, $\wedge$L, $\RWk$}
        \BIC{$\psi_0 , I[y' / y] \sdash y' = 0 \wedge H_{n + 1}(\pi_0)[y' / y]$}
        \LSC{$\vee$R}
        \UIC{$\psi_0 , I[y' / y] \sdash L \vee R$}
      \end{scprooftree}
      \begin{scprooftree}{1}
        
        \AXC{$y' = sy'' \sdash y' = sy''$}
        \AXC{IH of $\pi_s$}
        \UIC{$H_{n}(\pi_s), I^+ \sdash H_{n + 1}(\pi_s)$}
        \RSC{\Cref{lem:ha-ren}}
        \UIC{$H_{n}(\pi_s)[y'' / y], I^+[y'' / y] \sdash H_{n + 1}(\pi_s)[y'' / y] $}
        \RSC{\Cref{lem:end-lem} (iii)}
        \UIC{$H_{n}(\pi_s)[y'' / y], I[sy'' / y] \sdash H_{n +
            1}(\pi_s)[y'' / y] $}
        \RSC{$\exists$L, $\vee$R, $\exists$R, $\wedge$L, $\wedge$R}
        \BIC{$\psi_s, I[y' / y] \sdash L \vee R$}
      \end{scprooftree}
      \vspace{.5em}
    \item[$y \in \var(\Lambda \uh n + 1) \setminus \var(\Lambda \uh n)$:] Observe that the proof must
      have the form:
      \begin{scprooftree}{0.88}
        \AXC{$\pi'_0$}
        \UIC{$\Lambda' \mid \Gamma[0 / y] \sdash \delta[0 / y]$}
        \RSC{$\pi_0$}
        \LSC{$\RDrop$}
        \UIC{$\Lambda_1; y^\bullet \mapsto \Gamma' \sdash \delta'; \Lambda_2 \mid
          \Gamma[0 / y] \sdash \delta[0 / y]$}
        \AXC{$\pi_s$}
        \UIC{$\Lambda_1; y^+ \mapsto \Gamma' \sdash \delta'; \Lambda_2 \mid
          \Gamma[sy / y] \sdash \delta[sy / y]$}
        \LSC{$\RCase_y$}
        \BIC{$\Lambda_1; y^\bullet \mapsto \Gamma' \sdash \delta'; \Lambda_2 \mid
          \Gamma \sdash \delta$}
      \end{scprooftree}
      \vspace{.5em}
      with $\abs{\Lambda_1} = n$ and $\abs{\Lambda'} \leq n$. Hence,
      $H_{n}(\pi_0) = H_{\abs{\Lambda'}}(\pi_0') = H_{n + 1}(\pi_0)$ with $y
      \not\in \FV(H_{\abs{\Lambda'}}(\pi_0'))$, which we shall refer to by $(\star)$.
      Further, observe that $y \not\in \FV(H_{n}(\pi_S))$.
      Using the result of the induction hypothesis applied to \( \pi'_0 \) and \( \pi_s \), we derive:
      \begin{scprooftree}{1}
        \AXC{$\sdash
          0 = 0$}
        \AXC{IH of $\pi'_0$}
        \UIC{$H_{\abs{\Lambda'}}(\pi_0) \sdash H_{\abs{\Lambda'}}(\pi_0)$}
        \LSC{$\wedge$R}
        \BIC{$H_{\abs{\Lambda'}}(\pi_0) \sdash 0 = 0 \wedge H_{\abs{\Lambda'}}(\pi_0)$}
        \LSC{$(\star)$}
        \UIC{$H_{n}(\pi_0)[0 / y] \sdash 0 = 0 \wedge H_{n + 1}(\pi_0)[0 / y]$}

        \AXC{$\pi_0$}
        \LSC{$\RCase_{y'}, \RWk, \vee$R}
        \BIC{$H_{n}(\pi_0), H_{n}(\pi_s)[y' / y], I[y' / y] \sdash L
          \vee R$}
        \LSC{$y \not\in \FV(H_n(\pi_S))$}
        \UIC{$H_{n}(\pi_0), H_{n}(\pi_s), I[y' / y], y' \leq y \sdash L
          \vee R$}
        \LSC{\Cref{lem:end-lem} (ii)}
        \UIC{$H_{n}(\pi_0), H_{n}(\pi_s), I, y' \leq y \sdash L
          \vee R$}
        \LSC{$\wedge$L, $\forall$R*}
        \UIC{$H_{n}(\pi_0) \wedge H_{n}(\pi_s), I \sdash \forall y' \leq y. L
          \vee R$}
      \end{scprooftree}
      where \( \pi_0 \) is the derivation 
      \begin{scprooftree}{1}
        \AXC{IH of $\pi_s$}
        \UIC{$H_{n}(\pi_s), I \sdash H_{n + 1}(\pi_s)$}
        \RSC{\Cref{lem:ha-ren}}
        \UIC{$H_{n}(\pi_s)[y'' / y], I[y'' / y]
          \sdash H_{n + 1}(\pi_s)[y'' / y]$}
        \RSC{(ii)}
        \UIC{$H_{n}(\pi_s)[sy'' / y], I^+[y'' / y]
          \sdash H_{n + 1}(\pi_s)[y'' / y]$}
        \RSC{\Cref{lem:end-lem} (iii)}
        \UIC{$H_{n}(\pi_s)[sy'' / y], I[sy'' / y]
          \sdash H_{n + 1}(\pi_s)[y'' / y]$}
        \RSC{$\exists$R, $\wedge$R, $=$R}
        \UIC{$H_{n}(\pi_s)[sy'' / y], I[sy'' / y]
          \sdash \exists z.~sy'' = sz \wedge H_{n + 1}(\pi_s)[z / y]$}
      \end{scprooftree}
      \vspace{.5em}
    \item[Un:] The inductive hypothesis directly yields the desired claim.
    \item[Bin:] This is easily handled via $\wedge$R and subsequent $\wedge$L,
      weakenings and application of the corresponding IH.
    \end{description}
  \end{description}
\end{enumerate}
\end{proof}

\begin{theorem}\label{lem:key}
  If $\AHA \vdash \Gamma \sdash \delta$ then $\HA \vdash \Gamma \sdash \delta$.
\end{theorem}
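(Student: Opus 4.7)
The plan is to establish, by structural induction on \( \pi \), the strengthened claim that for every \( \AHA \)-proof \( \pi \) of \( \Lambda \mid \Gamma \sdash \delta \) we have \( \HA \vdash \HH_\Lambda(\pi), \Gamma \sdash \delta \). Since the endsequent of any \( \AHA \)-proof has shape \( \varepsilon \mid \Gamma \sdash \delta \) and \( \HH_\varepsilon(\pi) = H_0(\pi) = \top \), the theorem will follow at once by a \( \RCut \) against the trivially provable sequent \( \sdash \top \).

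For the axioms, the logical rules, \( \RWk \) and \( \RCut \), the stack is either empty or preserved, and I would mimic each rule in \( \HA \), recombining the partial inductive hypotheses via the catch-all \textsc{Un}/\textsc{Bin} clauses of \( H_n \). The \( \RDrop \) case is immediate from the defining equation of \( H_n \). At an \( \RBud \) leaf with conclusion \( \Lambda; x^+ \mapsto \Gamma \sdash \delta \mid \Gamma \sdash \delta \), the formula \( \HH \) is the invariant \( I_{\Lambda;\, x^+ \mapsto \Gamma \sdash \delta} \); I would close the leaf by instantiating the leading \( \forall x' \leq x \) with \( x \) itself, the bounded \( \forall \vec{u} \leq \vec{v} \) with \( \vec{v} \) (using reflexivity of \( \leq \)), and the remaining \( \forall \vec{w} \) appropriately, extracting \( \bigwedge \Gamma \to \delta \). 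For \( \RCase_x \) I would split on whether \( x \in \var(\Lambda) \). In the easier sub-case, \( \HH \) is simply \( H_n(\pi_0) \wedge H_n(\pi_s) \) and I would apply the admissible \( \RCase_x \) of \Cref{lem:rind}. In the other sub-case, \( \HH \) explicitly embodies the case distinction, and after instantiating \( \forall x' \leq x \) with \( x \) and applying \( \vee \)L, both branches are dispatched by the corresponding inductive hypotheses together with \( = \)L on the successor side.

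The main obstacle is the \( \RComp \) case. This is the only rule that introduces a companion label onto the stack while \( H_n \) treats it as a generic unary rule, so \( \HH_\Lambda(\pi) = H_{\abs{\Lambda}}(\pi') \) whereas the inductive hypothesis on \( \pi' \) supplies \( H_{\abs{\Lambda}+1}(\pi') \), carrying the extra invariant \( I_{\Lambda;\, x^- \mapsto \Gamma \sdash \delta} = \forall x' < x.~\widehat{I}[x'/x] \), which is precisely a strong induction hypothesis on \( x \). My plan is to first apply \Cref{lem:ih-lem}(iii) followed by \( \RCut \) to obtain \( \HA \vdash H_{\abs{\Lambda}}(\pi'), I_{\Lambda;\, x^- \mapsto \Gamma \sdash \delta}, \Gamma \sdash \delta \); then generalise over the free variables of \( \Gamma, \delta \) outside \( \var(\Lambda) \cup \{x\} \) using \( \forall \)R; and finally discharge the invariant via the admissible strong induction rule \( \RInd \) of \Cref{lem:rind} on \( x \), relying on \Cref{lem:ih-lem}(ii) to propagate \( H_{\abs{\Lambda}}(\pi') \) under the substitutions involved. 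The deliberate design of \( H_n \) and \( I_\Lambda \) is tailored precisely so that the inductive step supplied by \( \RInd \) matches what \Cref{lem:ih-lem}(iii) provides, so much of the genuine work has already been absorbed into that lemma.
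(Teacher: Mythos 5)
Your proposal follows the paper's proof essentially verbatim: the same strengthened claim $\HA \vdash \HH_\Lambda(\pi), \Gamma \sdash \delta$ proved by induction on $\pi$, with the same case analysis and with the critical $\RComp$ case discharged, exactly as in the paper, via \Cref{lem:ih-lem}(iii), \Cref{lem:ih-lem}(ii) and the admissible rule $\RInd$. The only cosmetic difference is that you cut \Cref{lem:ih-lem}(iii) against the inductive hypothesis before, rather than after, generalising the sequent into $\widehat{I}_{\Lambda'}$ (so you additionally need \Cref{lem:end-lem}(ii) to handle the renamed invariant), which changes nothing of substance.
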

\begin{proof}
  Let $\pi$ be a the derivation of $\varepsilon \mid \Gamma \sdash \delta$ in $\AHA$.
  We will prove a generalisation of the claim: If $\pi$ is
  an $\AHA$-derivation of $\Lambda \mid \Gamma \sdash \delta$ then $\HA \vdash
  \HH_\Lambda(\pi), \Gamma \sdash \delta$. For the original claim, this yields
  $\HA \vdash \HH_\varepsilon, \Gamma \sdash \delta$. Observe that
  $\HH_\varepsilon(\pi) = H_0(\pi)$ is always equivalent to $\top$, allowing us
  to conclude $\HA \vdash \Gamma \sdash \delta$ as desired. We now prove the
  generalisation by induction on the derivation $\pi$:
  \begin{description}
  \item[Bud:] In that case, $\HH_\Lambda(\pi) = I_\Lambda$, from which $\Gamma
    \to \delta$ readily follows.
  \item[$\RCase_x$:] Suppose the proof $\pi$ was of shape
    \begin{comfproof}
      \AXC{$\pi_0$}
      \UIC{$\Lambda' \mid \Gamma[0 / x] \sdash \delta[0 / x]$}
      \LSC{$\RDrop$}
      \UIC{$\Lambda \mid \Gamma[0 / x] \sdash \delta[0 / x]$}
      \AXC{$\pi_s$}
      \UIC{$\Lambda^{+x} \mid \Gamma[sx / x] \sdash \delta[sx / x]$}
      \LSC{$\RCase_x$}
      \BIC{$\Lambda \mid \Gamma \sdash \delta$}
    \end{comfproof}
    then derive
    \begin{comfproof}
      \AXC{IH for $\pi_0$}
      \UIC{$\HH_{\Lambda'}(\pi_0), \Gamma[0 / x] \sdash \delta[0 / x]$}
      \LSC{$\wedge$L, $=$L}
      \UIC{$x = 0 \wedge \HH_{\Lambda'}(\pi_0), \Gamma \sdash \delta$}
      \AXC{IH for $\pi_s$}
      \UIC{$\HH_\Lambda(\pi_s), \Gamma[sx / x] \sdash \delta[sx / x]$}
      \RSC{\Cref{lem:ha-ren}}
      \UIC{$\HH_\Lambda(\pi_s)[x'' / x], \Gamma[sx'' / x] \sdash \delta[sx'' / x]$}
      \RSC{$\exists$L, $\wedge$L, $=$L}
      \UIC{$\exists x'. x = sx' \wedge
        \HH_\Lambda(\pi_s)[x' / x], \Gamma \sdash \delta$}
      \LSC{$\vee$L}
      \BIC{$(x = 0 \wedge \HH_{\Lambda'}(\pi_0)) \vee (\exists x''. x = sx'' \wedge
        \HH_\Lambda(\pi_s)[x'' / x]), \Gamma \sdash \delta$}
      \LSC{$\forall$L*}
      \UIC{$\forall x' \leq x.~(x' = 0 \wedge \HH_{\Lambda'}(\pi_0)[x' / x]) \vee (\exists x''. x' = sx'' \wedge
        \HH_\Lambda(\pi_s)[x'' / x]), \Gamma \sdash \delta$}
    \end{comfproof}
    where $\star$ is an application of the usual renaming lemma.
  \item[Comp:] Suppose $\pi$ was of shape
    \begin{comfproof}
      \AXC{$\pi'$}
      \UIC{$\Lambda; x^- \mapsto \Gamma \sdash \delta \mid \Gamma \sdash \delta$}
      \UIC{$\Lambda \mid \Gamma \sdash \delta$}
    \end{comfproof}
    and fix $\Lambda' \coloneq \Lambda; x^- \mapsto \Gamma \sdash \delta$.
    Derive the following in $\HA$:
    \begin{scprooftree}{1}
      \AXC{\Cref{lem:ih-lem} (iii)}
      \UIC{$\HH_\Lambda(\pi), I_{\Lambda'} \sdash
        \HH_{\Lambda'}(\pi')$}
      \LSC{$\dagger$}

      \AXC{IH}
      \UIC{$\HH_{\Lambda'}(\pi'), \Gamma \sdash \delta$}
      \RSC{\Cref{lem:ha-ren}}
      \UIC{$\HH_{\Lambda'}(\pi')[\vec{u} / \vec{v}], \Gamma[\vec{u} / \vec{v}] \sdash \delta[\vec{u} / \vec{v}]$}
      \RSC{\Cref{lem:ih-lem} (ii)}
      \UIC{$\HH_{\Lambda'}(\pi'), \Gamma[\vec{u} / \vec{v}], \vec{u} \leq
        \vec{v} \sdash \delta[\vec{u} / \vec{v}]$}
      \RSC{$\forall$R*, $\to$L}
      \UIC{$\HH_{\Lambda'}(\pi') \sdash \widehat{I}_{\Lambda'}$}
      \LSC{$\RCut, \RWk$}
      \BIC{$\HH_\Lambda(\pi), I_{\Lambda'} \sdash \widehat{I}_{\Lambda'}$}
      \LSC{$\RInd$}
      \UIC{$\HH_\Lambda(\pi) \sdash \forall x. \widehat{I}_{\Lambda'}$}

      \AXC{$\Gamma, \bigwedge \Gamma \to \delta \sdash \delta$}
      \RSC{$\forall$L*}
      \UIC{$\Gamma, \forall x. \widehat{I}_{\Lambda'} \sdash \delta$}
      \LSC{$\RCut$}
      \insertBetweenHyps{\hskip -6em}
      \BIC{$\HH_\Lambda(\pi), \Gamma \sdash \delta$}
    \end{scprooftree}
    where the right-most leaf is a trivial proof and at $\dagger$ we use that $\HH_{\Lambda}(\pi) = \HH_{\Lambda}(\pi')$.
  \item[Pop:] Suppose $\pi$ was of shape
    \begin{comfproof}
      \AXC{$\pi'$}
      \UIC{$\Lambda_0 \mid \Gamma \sdash \delta$}
      \LSC{$\RDrop$}
      \UIC{$\Lambda_0; \Lambda_1 \mid \Gamma \sdash \delta$}
    \end{comfproof}
    Then $\HH_\Lambda(\pi) = \HH_{\Lambda_0; \Lambda_1}(\pi) = \HH_{\Lambda_0}(\pi')$
    and thus $\HA \vdash \HH_\Lambda(\pi), \Gamma \sdash \delta$ per IH of $\pi'$ directly.
  \item[Bin:] As $\RCase$ was already treated above, we may assume that $\RBin$
    directly corresponds to a logical rule $\RBin'$ of $\HA$. Thus $\pi$ is of the shape
    below, notably $\Lambda$ appearing unchanged in both premises.
    \begin{comfproof}
      \AXC{$\pi_l$}
      \noLine
      \UIC{$\Lambda \mid \Gamma_l \sdash \delta_l$}
      \AXC{$\pi_r$}
      \noLine
      \UIC{$\Lambda \mid \Gamma_r \sdash \delta_r$}
      \LSC{\textsc{Bin}}
      \BIC{$\Lambda \mid \Gamma \sdash \delta$}
    \end{comfproof}
    Then derive:
    \begin{comfproof}
      \AXC{IH of $\pi_l$}
      \UIC{$\HH_\Lambda(\pi_l), \Gamma_l \sdash \delta_l$}
      \LSC{$\wedge$L, $\RWk$}
      \UIC{$\HH_\Lambda(\pi_l) \wedge \HH_\Lambda(\pi_r), \Gamma_l \sdash \delta_l$}
      \RSC{$\wedge$L, $\RWk$}
      \AXC{IH of $\pi_r$}
      \UIC{$\HH_\Lambda(\pi_r), \Gamma_r \sdash \delta_r$}
      \UIC{$\HH_\Lambda(\pi_l) \wedge \HH_\Lambda(\pi_r), \Gamma_r \sdash \delta_r$}
      \LSC{$\RBin'$}
      \BIC{$\HH_\Lambda(\pi_l) \wedge \HH_\Lambda(\pi_r), \Gamma \sdash \delta$}
    \end{comfproof}
  \item[Un:] As $\RComp$ was already treated above, we may assume that $\RUn$
    directly corresponds to a logical or structural rule $\RUn'$ of $\HA$ and that
    the derivation is of the following shape, with $\HH_\Lambda(\pi) = \HH_{\Lambda'}(\pi')$:
    \begin{comfproof}
      \AXC{$\pi'$}
      \UIC{$\Lambda' \mid \Gamma' \sdash \delta'$}
      \LSC{$\RUn$}
      \UIC{$\Lambda \mid \Gamma \sdash \delta$}
    \end{comfproof}
    Then derive:
    \begin{comfproof}
      \AXC{IH}
      \UIC{$\HH_\Lambda'(\pi'), \Gamma' \sdash \delta'$}
      \LSC{$\RUn'$}
      \UIC{$\HH_\Lambda'(\pi'), \Gamma \sdash \delta$}
    \end{comfproof}
  \end{description}
\end{proof}

\section{Unfolding Cyclic Arithmetic}
\label{sec:combinatorics}

This section covers the translation of $\CHA$-proofs into $\AHA$-proofs. The
translation consists of two parts. First, $\CHA$-proofs are transformed into
$\CHA$-proofs of the same end-sequent in a normal-form which simplifies the
`interaction between cycles' within a proof. The $\CHA$-proofs in this normal-form
are in fact essentially unfoldings of the original $\CHA$-proofs, meaning computation
content is preserved. This first step is presented in
\Cref{sec:induction-orders} and directly follows Sprenger and
Dam~\cite{sprengerStructureInductiveReasoning2003a}, who show the analogous
result for $\mFOL$, first-order logic extended with least and greatest fixed
points. Their proof operates purely at the level of cyclic proofs and their
traces, not interacting with the logical properties of $\mFOL$ at all, and thus
readily transfers to $\CHA$ and $\CPA$. In the second step, $\CHA$-proofs in
said normal-form are translated $\AHA$-proofs, which we present in \Cref{sec:io-to-sha}.

We begin by fixing a concrete notion of \emph{tree} to work with in this section.
For $s, t \in \omega^*$, we write $s < t$ if $s$
is a strict prefix of $t$, and \( s \le t \) if either \( s < t \) or \( s = t \).
A \emph{tree} $T \subseteq \omega^*$ is a non-empty set of sequences
which is closed under prefixes: if $s \in T$ and $t < s$ then $t \in T$.
We denote by
{$[s, t]$} the path from $s$ to $t$, i.e. $[s, t] \coloneq \{u
\in \omega^* \mid s \leq u \leq t\}$.
We denote the extension of $s \in \omega^*$ with $n \in \omega$ by $s \cdot n$. Concatenation of sequences is denoted by juxtaposition such that \( s \le st \) for all \( s , t \in \omega^* \).
Each $t \in T$ is called a \emph{node} and the nodes
in $\Chld_T(t) \coloneq \{t \cdot i \in T ~|~ i \in \omega\}$ are called the
\emph{children} of \( t \). We omit the parameter $T$, writing $\Chld(t)$,
whenever this does not cause any confusion.
A node $t$ is a \emph{leaf} of $T$ if $\Chld(t) = \emptyset$
and an \emph{inner node} otherwise. We write \( \Leaf(T) \) and \( \Inner(T) \) for the set of leaves and inner nodes of \( T \), respectively.

A \emph{cyclic tree} is a pair $C = (T, \beta)$ of a finite tree $T$ and
a partial function $\beta \colon \Leaf(T) \to \Inner(T)$ mapping some leaves of $T$
onto inner nodes of $T$.
If $t \in \dom(\beta)$ one calls it a \emph{bud} and $\beta(t)$ its \emph{companion}.
For a bud $t \in \dom(\beta)$, we call the path $[\beta(t), t]$ the \emph{local
  cycle} of $t$ and denote it by $\gamma_C(t)$. We omit the parameter $C$,
writing $\gamma(c)$, whenever this causes no confusion.

Cyclic trees $(T, \beta)$ which satisfy $\beta(t) < t$ for every $t \in \dom(\beta)$ are
said to be in \emph{cycle normal-form}. Every cyclic tree can be `unfolded'
into cycle normal-form, although at an super-exponential size
increase~\cite[Theorem 6.3.6]{brotherstonSequentCalculusProof2006}. We will thus
assume, without loss of generality, that all cyclic trees in this section are in
cycle normal-form. Note that cycle normal-form is not the normal-form of
$\CHA$-proofs with which \Cref{sec:induction-orders} is concerned.

Fix a cyclic tree $C = (T, \beta)$. A \emph{finite path} $p$ through $C$ is a
non-empty sequence $p \in T^+$ such that for every $i < \abs{p} - 1$ we have
$p_{i + 1} \in \Chld(p_i)$ if $p_i \in \Inner(T)$ and $p_i \in
\dom(\beta)$ with $p_{i + 1} = \beta(p_i)$ otherwise. We say $p$
\emph{starts at} $p_0$ and \emph{ends at} $p_{\abs{p} - 1}$. An \emph{infinite branch} through $C$
is an infinite sequence $t \in T^\omega$ with $t_0 = \varepsilon$ and such
that for each $i \in \omega$ either $t_{i + 1} \in \Chld(t_i)$ or $t_i \in
\dom(\beta)$ and $t_{i + 1} = \beta(t_i)$. If $C$ is the underlying cyclic tree
of a $\CHA$-proof $(C, \rho)$, this induces an infinite sequence
$(\Gamma_i \sdash \delta_i)_{i \in \omega}$ of sequents with $\lambda(t_i) =
\Gamma_i \sdash \delta_i $, which we use interchangeably with $(t_i)_{i \in
  \omega}$ to denote an infinite branch.

\subsection{Induction Orders}
\label{sec:induction-orders}

Sprenger and Dam~\cite{sprengerStructureInductiveReasoning2003a} work with a
cyclic proof system using induction orders, a soundness condition of cyclic proofs different from the
global trace condition given in \Cref{def:cha-proof}. We begin by transferring this soundness
condition to the setting of Heyting arithmetic.

Induction orders are characterised in terms of the strongly connected subgraphs of cyclic
proofs, which can, in turn, be identified with a subset $\eta \subseteq
\dom(\beta)$.
\begin{definition}
  Fix a cyclic tree $C = (T, \beta)$ and let $\eta \subseteq \dom(\beta)$.
  \begin{enumerate}
  \item The set of nodes $C[\eta] \coloneq \bigcup_{s \in \eta} \gamma(s)$ is
    said to be part of $C$ \emph{covered} by the cycles corresponding to the buds in $\eta$.
  \item
    The set $\eta$ is \emph{strongly connected} if
    there a finite path $p$ through $C$ which (1) starts and ends at the same
    node and (2) visits precisely the nodes in $C[\eta]$, i.e.\ $p$ visits a node $t \in T$ if and only
    if $t \in C[\eta]$.
  \item A strongly connected $\eta$ is said to be \emph{$B$-maximal} for $B \subseteq \dom(\beta)$ if $\eta \subseteq B$ and there
      is no $\eta \subsetneq \eta' \subseteq B$ which is strongly connected.
  \item
    Let $p$ be an infinite path through $C$ then $\Inf(p) \coloneq \{s \in T
~|~ p_i = s \text{ infinitely often}\}$.
  \end{enumerate}
\end{definition}

The strongly connected components of a cyclic tree are closely linked to their infinite
branches: For any infinite branch through a cyclic tree, the nodes visited
infinitely often by it are precisely $C[\eta]$ for a strongly connected $\eta$.
A proof of \Cref{lem:inf-cs}, for an equivalent definition of
strongly connected component, result can be found in \cite[Lemma
5.3]{leighGTCResetGenerating2023}.

\begin{lemma}\label{lem:inf-cs}
  Let $t \in T^\omega$ be an infinite branch of a cyclic tree $C = (T, \beta)$.
  Then there exists a strongly connected component $\eta$ of $C$ such that $\text{Inf}(t) =
  C[\eta]$.
\end{lemma}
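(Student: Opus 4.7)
The plan is to take $\eta \coloneq \dom(\beta) \cap \Inf(t)$ and verify both $\Inf(t) = C[\eta]$ and that $\eta$ is strongly connected. Since $T$ is finite, $\Inf(t)$ is non-empty and there exists $N \in \omega$ past which the branch remains within $\Inf(t)$, i.e., $t_n \in \Inf(t)$ for all $n \geq N$.

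For the inclusion $\Inf(t) \subseteq C[\eta]$, I fix $s \in \Inf(t)$ and pick $k < k'$ in $[N, \infty)$ with $t_k = t_{k'} = s$. Let $i_1 < \cdots < i_m$ enumerate the bud-jumps in $(k, k')$ and set $b_l \coloneq t_{i_l}$. Every non-bud-jump step is a tree-descent, so the segments $[k, i_1]$, $[i_l + 1, i_{l+1}]$, and $[i_m + 1, k']$ witness $s \leq b_1$, $\beta(b_l) \leq b_{l+1}$ for $l < m$, and $\beta(b_m) \leq s$, respectively. By induction on $l$, using that whenever $s \leq b_l$ the nodes $s$ and $\beta(b_l)$ both lie on the ancestor chain of $b_l$ and are therefore comparable in the tree order, we obtain some $l$ with $\beta(b_l) \leq s \leq b_l$, i.e., $s \in \gamma(b_l)$. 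Since $i_l \geq N$, this $b_l$ lies in $\eta$, placing $s \in C[\eta]$.

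For the converse inclusion $C[\eta] \subseteq \Inf(t)$, I fix $b \in \eta$ and enumerate $\gamma(b)$ as $u_0, \ldots, u_k$ from $u_0 = \beta(b)$ down to $u_k = b$. It suffices to show that each $u_i$ is visited during every interval $[k', k'']$ between consecutive visits to $b$. Let $S_i \subseteq T$ be the subtree of $T$ rooted at $u_i$, and let $\tau_i$ be the smallest index in $(k', k'']$ with $t_{\tau_i} \in S_i$. The key observation is that the branch can only enter $S_i$ via a tree-descent from $u_{i-1}$ to $u_i$: any bud-jump from a bud outside $S_i$ to a companion inside $S_i$ would force the companion to be an ancestor of the bud, hence the bud to lie within $S_i$, a contradiction. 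Consequently $t_{\tau_i} = u_i$.

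Strong connectedness of $\eta$ then follows at once from $\Inf(t) = C[\eta]$: fix any $s \in \Inf(t)$ and pick $i < j$ in $[N, \infty)$ with $t_i = t_j = s$ such that every element of the finite set $\Inf(t)$ appears in $(t_i, \ldots, t_j)$, which is possible because every node of $\Inf(t)$ is visited infinitely often. This subsegment is then a closed path of $C$ visiting precisely $C[\eta]$. The subtlest step is the converse inclusion: naively the branch might visit $b$ infinitely often while skipping some intermediate $u_i$ through a cleverly placed bud-jump from a side branch, but the subtree-entry argument---relying essentially on the cycle normal-form condition $\beta(b') < b'$---rules this out by showing that bud-jumps may exit subtrees of $T$ but can never enter them from outside.
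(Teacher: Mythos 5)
Your proof is correct; note, however, that the paper does not actually prove this lemma itself but defers to Lemma 5.3 of the cited Leigh--Wehr reference (for an equivalent formulation of strong connectedness), so you have supplied a self-contained argument where the paper only has a citation. Your choice $\eta = \dom(\beta) \cap \Inf(t)$ is the natural one and all three verifications go through. For $\Inf(t) \subseteq C[\eta]$: between two late visits to $s$ there is at least one bud-jump (otherwise the prefix order would give $s < s$), and your chain $s \leq b_1$, $\beta(b_l) \leq b_{l+1}$, $\beta(b_m) \leq s$, combined with comparability of the ancestors of $b_l$ (this is where cycle normal form, $\beta(b_l) < b_l$, enters), does produce some $l$ with $\beta(b_l) \leq s \leq b_l$ --- if none existed one would conclude $s < \beta(b_m) \leq s$. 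The converse inclusion is the delicate part and your subtree-entry observation is exactly the right tool: under cycle normal form a bud-jump always moves to a proper ancestor, so it can never enter $S_i$ from outside, and hence the first entry into $S_i$ after a visit to $b$ must be a child-step landing on $u_i$ itself. The one wrinkle is $i = 0$: there the branch reaches $u_0 = \beta(b)$ by the bud-jump out of $b$, and $t_{k'} = b$ already lies in $S_0$, so the ``only via tree-descent'' claim does not literally apply; but $u_0 = t_{k'+1}$ is visited outright, so the conclusion is unaffected. The final strong-connectedness step is also fine, since a tail segment between two visits to the same node, chosen long enough to sweep out the whole of the finite set $\Inf(t)$, is a finite path in the paper's sense that visits precisely $C[\eta]$.
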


A key property of
$B$-maximal strongly connected components is that they
are disjoint iff they are distinct.

\begin{lemma}
  For a cyclic tree $C = (T, \beta)$ let $B \subseteq \dom(\beta)$ and let
  $\eta, \eta' \subseteq B$ be $B$-maximal strongly connected components. Then either $\eta = \eta'$ or $\eta
  \cap \eta' = \emptyset$.
\end{lemma}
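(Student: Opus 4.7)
The plan is to prove the contrapositive: if $\eta \cap \eta' \neq \emptyset$, then $\eta = \eta'$. Since both $\eta$ and $\eta'$ are $B$-maximal, it suffices to show that $\eta \cup \eta'$ is itself a strongly connected subset of $B$. The obvious candidate for the witnessing path is obtained by concatenating the witnessing paths for $\eta$ and $\eta'$ at a shared node.

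First I would observe that $C[\eta \cup \eta'] = C[\eta] \cup C[\eta']$, which is immediate from the definition $C[\eta] = \bigcup_{s \in \eta} \gamma(s)$, and that $\eta \cup \eta' \subseteq B$ since both $\eta, \eta' \subseteq B$. Next, I would pick some $b \in \eta \cap \eta'$. Because $b \in \dom(\beta)$ and $b \in \gamma(b) = [\beta(b), b]$, we have $b \in C[\eta]$ and $b \in C[\eta']$. Let $p$ and $p'$ be the finite paths witnessing strong connectedness of $\eta$ and $\eta'$, respectively; each starts and ends at a common node and visits precisely the nodes of $C[\eta]$ (resp.\ $C[\eta']$).

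The key step is a cyclic rotation: since $p$ is a closed path visiting $b$, it can be rewritten as a closed path starting and ending at $b$ while visiting the same set of nodes. Concretely, if $p = p_0, \ldots, p_n$ with $p_0 = p_n$ and $p_k = b$ for some $k$, then $p_k, p_{k+1}, \ldots, p_{n-1}, p_0, p_1, \ldots, p_k$ is a finite path of $C$ (each consecutive pair is an edge of $C$ by the validity of $p$) starting and ending at $b$ and visiting precisely the same nodes as $p$. Apply the same rotation to $p'$ and concatenate the two resulting paths at $b$. The concatenation is a closed finite path at $b$ which visits a node $t$ if and only if $t \in C[\eta] \cup C[\eta'] = C[\eta \cup \eta']$. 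Hence $\eta \cup \eta'$ is strongly connected.

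Finally, since $\eta \cup \eta' \subseteq B$ and $\eta, \eta'$ are $B$-maximal, the inclusions $\eta \subseteq \eta \cup \eta'$ and $\eta' \subseteq \eta \cup \eta'$ must be equalities, so $\eta = \eta \cup \eta' = \eta'$. The only subtle point in the argument is verifying that the cyclic rotation of the witnessing paths yields a valid finite path through $C$; this is routine because edges in $C$ are determined locally by the child relation of $T$ and the companion map $\beta$, both of which are preserved under the rotation. All other steps are bookkeeping on the set-theoretic definition of $C[\eta]$.
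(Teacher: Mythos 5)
Your proof is correct and follows essentially the same route as the paper's: pick a shared bud, paste together the two witnessing closed paths at that node to show $\eta \cup \eta'$ is strongly connected, and conclude by $B$-maximality. The only difference is that you make explicit the cyclic rotation that the paper hides behind ``w.l.o.g.\ there exist paths starting and ending at $s$,'' which is a reasonable bit of added care.
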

\begin{proof}
  Suppose there was $s \in \eta \cap \eta'$. Then, w.l.o.g. there exists paths
  $p$ and $p'$ starting and ending at $s$ such that $p$
  and $p'$ visit precisely the nodes of $C[\eta]$ and $C[\eta']$,
  respectively. `Pasting together' $p$ and $p'$ yields a path from $s$ to
  $s$ visiting precisely the elements of $C[\eta] \cup C[\eta'] = C[\eta \cup
  \eta']$, meaning $\eta \cup \eta'$ is strongly connected. By the
  $B$-maximality of $\eta$ and $\eta'$ it then follows that $\eta = \eta \cup
  \eta' = \eta'$.
\end{proof}

Having established some properties of strongly connected components, we now define induction orders.

\begin{definition}[Induction order]\label{def:io}
  Let $\pi = (C, \rho)$ be a $\CHA$-pre-proof.
 An \emph{induction order} is a preorder
  $\mathrel{\preceq}\; \subseteq
  \dom(\beta) \times \dom (\beta) $ together with a mapping $x_{-} : \dom(\beta)
  \to \var$ such that $x_s \in \FV(\lambda(s))$ for all $s \in \dom(\beta)$
  and
  \begin{itemize}
  \item if $s \preceq t$ then $\gamma(s)$ \emph{preserves} $x_t$, i.e.\ $x_t \in
    \FV(\lambda(u))$ for every $u \in \gamma(s)$,
  \item the cycle $\gamma(s)$ \emph{progresses} $x_s$, i.e.\ a $\RCase_{x_s}$
    instance is applied along the path $\gamma(s)$,
  \item every strongly connected $\eta \subseteq \dom(\beta)$ has a
    $\preceq$-maximum, i.e.\ there is $s \in \eta$ such that $t \preceq s$ for
    all $t \in \eta$
  \end{itemize}
\end{definition}

In our cyclic system for Heyting arithmetic, every proof has an induction order.
This fact is closely linked to the notion of trace and the trace condition of
$\CHA$. Other cyclic proof systems for first-order arithmetic,
such as those considered by Simpson~\cite{simpsonCyclicArithmeticEquivalent2017}
or Berardi and Tatsuta~\cite{berardiEquivalenceIntuitionisticInductive2017}, do
not exhibit this property.
The proof of this property is based on \cite[Theorem
5.14]{sprengerGlobalInductionMechanisms2003} which proves the same property for
the $\mFOL$-system Sprenger and Dam consider.

\begin{restatable}{lemma}{existsio}
  \label{lem:exists-io}
  A $\CHA$-pre-proof $\pi$ is a proof iff there exists an induction order for it.
\end{restatable}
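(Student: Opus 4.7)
The easy direction $(\Leftarrow)$ refines the soundness argument of \Cref{lem:sound}. Suppose $\pi$ admits an induction order $(\preceq, x_-)$ and let $(t_i)_{i \in \omega}$ be an infinite branch. By \Cref{lem:inf-cs}, $\Inf((t_i)_{i \in \omega}) = C[\eta]$ for some strongly connected $\eta \subseteq \dom(\beta)$. Let $s \in \eta$ be $\preceq$-maximal. Since $t \preceq s$ for every $t \in \eta$, each $\gamma(t)$ preserves $x_s$, so $x_s \in \FV(\lambda(u))$ for every $u \in C[\eta]$. The tail of the branch lies in $C[\eta]$, producing an $x_s$-trace. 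The branch visits $s$ infinitely often and therefore traverses $\gamma(s)$ infinitely often; since $\gamma(s)$ progresses $x_s$, the trace is progressing.

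For $(\Rightarrow)$, the plan is to build $(\preceq, x_-)$ by recursion on $|B|$ for $B \subseteq \dom(\beta)$, processing $B$-maximal strongly connected subsets top-down. In the recursive step, for each $B$-maximal strongly connected $\eta \subseteq B$ I would select a distinguished bud $s_\eta \in \eta$ together with a variable $x_{s_\eta}$ via the combinatorial claim below, declare $s_\eta$ the $\preceq_B$-maximum of $\eta$ by setting $t \preceq_B s_\eta$ for every $t \in \eta$, and invoke the induction hypothesis on $B$ with all such $s_\eta$ removed. Different $B$-maximal strongly connected subsets are disjoint by the earlier lemma, so these choices combine cleanly; a short direct argument (any two successive steps $s \preceq_0 u \preceq_0 t$ must have $s$ lying in the same maximal component as $t$, by maximality applied at the earlier step) shows the stepwise relation is already transitive, yielding the sought preorder after adding reflexivity.

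The combinatorial heart of the argument is the following: for every strongly connected $\eta \subseteq \dom(\beta)$, there exist a bud $s \in \eta$ and a variable $x$ such that (a) $x \in \FV(\lambda(u))$ for every $u \in C[\eta]$, and (b) $\RCase_x$ is applied at some node of $\gamma(s)$. By strong connectedness of $\eta$, pick a finite path $p$ through $C$ that starts and ends at the same node while visiting precisely $C[\eta]$. Iterating $p$ yields an infinite path visiting each node of $C[\eta]$ infinitely often, which prefixed by the tree path from the root to $p_0$ gives an infinite branch through $\pi$. The trace condition supplies a variable $x$ with a progressing $x$-trace along this branch. Since $x$ is free at every node from some position onwards and the branch hits each $u \in C[\eta]$ arbitrarily late, (a) follows. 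Progression guarantees that $\RCase_x$ appears at infinitely many nodes of the iterated portion, so at least one such application lies in some $\gamma(s)$ with $s \in \eta$, giving (b). Setting $x_s \coloneq x$ is consistent with the requirement $x_s \in \FV(\lambda(s))$ because $s \in \gamma(s) \subseteq C[\eta]$.

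The main obstacle is verifying that the recursion witnesses a $\preceq$-maximum for \emph{every} strongly connected subset of $\dom(\beta)$, not merely the $B$-maximal ones explicitly processed. The crucial technical observation is that buds are leaves of $T$, so a bud $s_\eta$ lies in some $\gamma(t)$ only when $s_\eta = t$. Consequently any strongly connected $\zeta \not\ni s_\eta$ retains its witnessing path inside $C[\zeta]$ and so remains strongly connected after $s_\eta$ is removed from consideration. An inductive argument on $|\zeta|$ then shows that every strongly connected $\zeta$ is contained in a unique $\dom(\beta)$-maximal strongly connected component $\eta$, and either $s_\eta \in \zeta$ (in which case $s_\eta$ is the desired maximum by construction) or the maximum is supplied by the recursion on $\dom(\beta) \setminus \{s_\eta\}$. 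Together with disjointness of maximal strongly connected components this secures the third clause of the induction order, while preservation and progression follow directly from parts (a) and (b) of the combinatorial claim.
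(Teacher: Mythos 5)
Your proposal is correct and follows essentially the same route as the paper: the backwards direction is the same refinement of the soundness argument via \Cref{lem:inf-cs}, and the forward direction is the same iterative construction that repeatedly extracts, from each maximal strongly connected component, a distinguished bud and progressing variable by applying the global trace condition to the infinite branch obtained by iterating the component's witnessing path, then recurses on the remaining buds. The only differences are presentational (recursion on $|B|$ versus an explicit sequence $S_0, S_1, \ldots$ of component families), so no further comparison is needed.
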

\begin{proof}
  For the backwards direction, consider a branch $t \in T^\omega$ through $\pi$.
  Then $\Inf(t) = C[\eta]$ for some strongly connected $\eta$ by \Cref{lem:inf-cs}.
  Notably, there must be some $n \in \omega$ such that $t_i \in C[\eta]$ for all
  $i > n$. As $\preceq$ is an induction order, $\eta$ must have a {$\preceq$-maximum $s
  \in \eta$}. Then $x_s \in \bigcap_{i > n}
  \var(\Gamma_i, \delta_i)$ because {$\gamma(u)$} for each $u \in \eta$ preserves $x_s$, meaning there is an $x_s$-trace along $t$. This trace
  is progressing as there is a $\RCase_{x_s}$-instance along $\gamma(s)$ which
  $t$ passes infinitely often.

  Conversely, suppose $\pi$ was a proof. We obtain an induction order on
  $\pi$ through an iterative process. This process produces a sequence $S_0,
  S_1, \ldots$ of sets of disjoint strongly connected components of $\pi$ such that at each
  step, some bud $s \in \dom(\beta)$ is removed from $\bigcup S_i$. To begin, define $S_0 \coloneq
  \{\eta \subseteq \dom(\beta) ~|~ \eta \text{ is a } \dom(\beta)\text{-maximal
    strongly connected component}\}$. To obtain $S_{i + 1}$, select some $\eta_i \in S_i$ and
  consider a branch $t \in T^\omega$ through $\pi$ such that $\Inf(t) = \eta_i$.
  As $\pi$ is a proof, there must be an $n \in \omega$ such that for some $x \in
  \bigcap_{n < j} \var(\Gamma_i, \delta_i)$ there exists a progressing $x$-trace
  along $p$. Hence, $x$ must be preserved along the cycle {$\gamma(t)$} for every $t
  \in \eta$ and there must be an $s \in \eta$ for which the cycle $\gamma(s)$
  progresses $x$. Fix $x_s \coloneq x$ and set
  \[S_{i + 1} \coloneq (S_{i} \setminus \eta_i) \cup \{\eta' \subseteq (\eta_i
    \setminus \{s\}) ~|~ \eta' \text{ is a } (\eta_i
    \setminus \{s\})\text{-maximal strongly connected component} \}.\]
  That is, $S_{i + 1}$ is obtained by replacing $\eta_i$ in $S_i$ by the maximal
  strongly connected {components partitioning} $\eta_i$ after
  removing $s$. {Because each $S_i$ consists of pairwise
    disjoint strongly connected components, each $S_i$ partitions $\bigcup
  S_i$.} This process terminates at $S_{\abs{\dom(\beta)}} = \emptyset$ as every
  step removes a bud from $\bigcup S_i$. It remains to define the preorder
  $\preceq$ on $\dom(\beta)$.
  For $s, t \in \dom(\beta)$, let $i$ be the least index 
  {
  such that $t \not\in \bigcup S_{i + 1}$, i.e.\ $t$ was removed at step $i$ of
  the procedure. We set $s \preceq t$ iff $s \in \eta_i$.
  Now consider a strongly connected $\eta \subseteq \dom(\beta)$.} We claim that
  there is a $\preceq$-maximum of $\eta$. As the
  elements of $S_0$ are $\dom(\beta)$-maximal, there must be $\eta' \in S_0$
  such that $\eta \subseteq \eta'$. Now at each step of the process, {if $\eta'
  \in S_i$} then either
  $\eta' = \eta_i$ or $\eta' \in S_{i + 1}$. Because $S_{\abs{\dom(\beta)}} =
  \emptyset$ there thus must be $\eta_i = \eta'$. Consider the bud $s$ which is
  removed from $\eta_i$ to construct $S_{i + 1}$; if $s \not\in \eta$ then there
  is a $(\eta_i \setminus \{s\})$-maximal $\eta'' \in S_{i + 1}$ such that $\eta
  \subseteq \eta''$. Again, because $S_{\abs{\dom(\beta)}} = \emptyset$ there eventually
  must be a step $i$ such that $\eta \subseteq \eta_i$ and the bud $s$ removed
  from $\eta_i$ is in $\eta$. Then $s$ is a $\preceq$-maximum in $\eta_i$, and thus
  in $\eta$, per definition of $\preceq$.
  Reflexivity of $\preceq$ is trivial. Transitivity of $\preceq$ follows from an
  argument, similar to that for $\preceq$-maxima, along the
  iterative process, observing that for $s \preceq
  t \preceq u$ with corresponding indexes $t \not\in \bigcup S_{i + 1}, u \not\in
  \bigcup S_{j + 1}$ and $s \in \eta_i, t \in \eta_j$, $\eta_i \subsetneq \eta_j$ because each step `separates'
  some $\eta_k$ into strongly connected sub-components $\eta' \subsetneq \eta_k$.
\end{proof}

The following combinatorial lemma is crucial to the proof normalisation of $\CHA$-proofs.
A pre-proof $\pi = ((T, \beta), \lambda)$ is called \emph{injective} if the
function $\beta$ is injective.

\begin{lemma}\label{lem:exists-injective}
  If $\CHA \vdash \Gamma \sdash \delta$ then there exists an injective $\CHA$-proof of
  $\Gamma \sdash \delta$.
\end{lemma}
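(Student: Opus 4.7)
Write $\pi = (C, \rho)$ with $C = (T, \beta)$; without loss of generality, assume $C$ is in cycle normal form. For each companion $c \in \im(\beta)$, let $B_c = \beta^{-1}(\{c\})$, and define the non-injectivity measure
\[
  \mu(\pi) \;=\; \sum_{c \in \im(\beta)} \bigl(\abs{B_c} - 1\bigr),
\]
so that $\pi$ is injective precisely when $\mu(\pi) = 0$.

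The plan is to argue by induction on $\mu(\pi)$. If $\mu(\pi) > 0$ the goal is to construct a $\CHA$-proof $\pi'$ of the same endsequent $\Gamma \sdash \delta$ with $\mu(\pi') < \mu(\pi)$. The construction is a controlled ``splitting'' of a well-chosen companion. Concretely, apply \Cref{lem:exists-io} to fix an induction order $\preceq$ on $\dom(\beta)$, and then select a bud $b$ whose companion $c = \beta(b)$ satisfies $\abs{B_c} \geq 2$ and such that $b$ is $\preceq$-minimal amongst all such buds. Duplicate the subproof $\pi_c$ rooted at $c$ into a fresh copy $\pi_c'$, redirect $b$ so that it becomes a bud with companion the root of $\pi_c'$, and splice $\pi_c'$ into $\pi$ in place of $b$, with its internal buds routed back to already-existing companions of $\pi$.

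Verifying that $\pi'$ is a $\CHA$-proof proceeds by noting that $\pi'$ is a partial unfolding of $\pi$: every infinite branch through $\pi'$ projects, via collapsing the copy of $\pi_c$ back onto $\pi_c$, to an infinite branch through $\pi$, and the progressing trace of the latter lifts back to a progressing trace on the original branch. This preserves the global trace condition, so $\pi'$ is indeed a proof.

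The main obstacle is arranging the splicing so that the internal buds of $\pi_c'$ can be directed to existing companions without creating new conflicts, and so that $\mu$ strictly decreases. The choice of $b$ as $\preceq$-minimal is the essential ingredient: the induction order guarantees that every strongly connected component strictly ``below'' $b$ has only single-bud cycles, so redirecting the internal buds of the duplicate $\pi_c'$ amongst these already-injective companions can be done without spoiling their injectivity. A careful local calculation then yields $\mu(\pi') = \mu(\pi) - 1$, and the induction concludes.
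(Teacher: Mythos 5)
There is a genuine gap in your plan: the claimed strict decrease $\mu(\pi') = \mu(\pi) - 1$ does not hold for the construction you describe. Since the proof is in cycle normal form, every bud in $B_c$ lies inside the subtree $\pi_c$ rooted at $c$. Duplicating $\pi_c$ therefore duplicates all of those buds, including a copy $b'$ of $b$ itself, and each copy carries the sequent $\lambda(c)$, so it must be retargeted either to $c$ or to the root of the fresh copy $\pi_c'$. In the first case $B_c$ regains $\abs{B_c}$ new members after losing only $b$, so $\mu$ increases whenever $\abs{B_c} \geq 2$; in the second case the root of $\pi_c'$ becomes a new companion with $\abs{B_c}$ buds, and the net change in $\mu$ is $\abs{B_c} - 2 \geq 0$. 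Either way your induction does not go through. The appeal to a $\preceq$-minimal bud does not repair this: an induction order constrains which variables are preserved and progressed along cycles, but says nothing about the multiplicity of $\beta$ on the components below $b$, so the assertion that those components ``have only single-bud cycles'' is unsupported. This is essentially why genuine unfolding arguments incur super-exponential blow-up rather than terminating under a simple counting measure.

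The statement has a much more elementary proof that avoids duplication entirely. Given two distinct buds $u \neq v$ with $\beta(u) = \beta(v) = t$ and $\lambda(t) = \Gamma' \sdash \delta'$, insert a vacuous instance of $\RWk$ (removing no formulas) immediately above $t$, creating a new node $t'$ with $\lambda(t') = \lambda(t)$, and redirect $\beta(u) \coloneq t'$. This splits one companion with $k$ buds into two companions with $k-1$ and $1$ buds, so your measure $\mu$ does decrease by exactly $1$, and since the inserted $\RWk$ changes no sequent, every infinite branch and every trace is unaffected, so the global trace condition is preserved without any lifting argument. Iterating finitely often yields the injective proof. You should replace the subproof-duplication step with this local insertion; the rest of your framing (induction on $\mu$) then works verbatim.
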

\begin{proof}
  Suppose $\pi = ((T, \beta), \rho)$ was a proof of $\Gamma \sdash \delta$.
  To obtain an injective proof of $\Gamma \sdash \delta$, apply the following
  procedure iteratively: For $u \neq v \in \dom(\beta)$ with $\beta(u) =
  \beta(v) = t$ and $\lambda(t) = \Gamma' \sdash \delta$ as depicted on the
  left-hand side of \Cref{f-injective}, add an application of the $\RWk$-rule which does not
  remove any formulas from $\Gamma'$ directly above $t$, `generating' a new node
  $t'$ above $t$ with $\lambda(t') = \lambda(t) = \Gamma' \sdash \delta$ and
  `redirect' $\beta(u) \coloneq t'$, as depicted on the right-hand side. After finitely
  many iteration steps, this process yields an injective pre-proof. As these transformations
  do not affect the traces through the
  pre-proof, it remains a proof.
\end{proof}
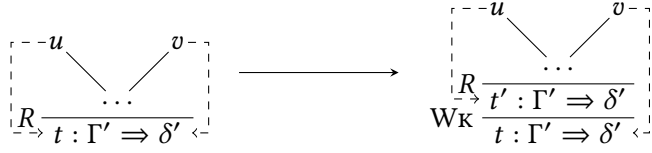
\begin{figure}
\centering
\begin{tikzpicture}
  \node[inner sep=1] (u) at (-0.8, 1.2) {$u$};
  \node[inner sep=1] (v) at (0.8, 1.2) {$v$};
  \node (sp) at (0, 0.4) {$\ldots$};
  \draw (sp) -- (v);
  \draw (sp) -- (u);
  \draw (-1, 0.2) -- (1, 0.2);
  \node at (-1.2, 0.2) {$R$};
  \node (t) at (0, 0) {$t : \Gamma' \sdash \delta'$};

  \draw[->, dashed] (u) -- (-1.4, 1.2) -- (-1.4, 0) -- (t);
  \draw[->, dashed] (v) -- (1.2, 1.2) -- (1.2, 0) -- (t);

  \draw[-{stealth}] (1.6, 0.8) -- (3.7, 0.8);

  \node[inner sep=1] (u) at (5, 1.65) {$u$};
  \node[inner sep=1] (v) at (6.6, 1.65) {$v$};
  \node (sp) at (5.8, 0.85) {$\ldots$};
  \draw (sp) -- (v);
  \draw (sp) -- (u);
  \draw (4.8, 0.65) -- (6.8, 0.65);
  \node at (4.6, 0.65) {$R$};
  \node (t') at (5.8, 0.45) {$t' : \Gamma' \sdash \delta'$};
  \draw (4.8, 0.2) -- (6.8, 0.2);
  \node at (4.4, 0.2) {$\RWk$};
  \node (t) at (5.8, 0) {$t : \Gamma' \sdash \delta'$};

  \draw[->, dashed] (u) -- (4.4, 1.65) -- (4.4, 0.45) -- (t');
  \draw[->, dashed] (v) -- (7, 1.65) -- (7, 0) -- (t);
\end{tikzpicture}
\caption{Transformation into injective cyclic proofs}
\label{f-injective}
\end{figure}

The translation procedure of Sprenger and
Dam~\cite{sprengerStructureInductiveReasoning2003a} operates on cyclic proofs
whose induction order coincides with the structure of the proof's cycles.
The \emph{structural dependency
  order $\sqsubseteq$} is defined on $\dom(\beta)$ such that $s \sqsubseteq t$
holds iff $\beta(s) \in \gamma(t)$, i.e. if the companion of $s$ occurs along
the local cycle of $t$. The transitive closure of \( \sqsubseteq \) is denoted \( \sqsubseteq^* \).
Sprenger and Dam show that every $\mFOL$-proof can be unfolded into a proof
with induction order $\sqsubseteq^*$. This result readily transfers to the
setting of $\CHA$.

\begin{theorem}
  \label{thm:ind-order-SD}
  For every injective \( \CHA \)-proof \( \pi \) with induction order \( \sqsubseteq \) there is an injective \( \CHA \)-proof \( \pi' \) with induction order \( \sqsubseteq^* \) employing only sequents in \( \pi \).
\end{theorem}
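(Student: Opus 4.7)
The plan is to unfold $\pi$ so that the induction order on the resulting proof is structural, i.e.\ witnessed by the companion-to-cycle relation $\sqsubseteq$ itself. Concretely, I aim to arrange $\pi'$ so that for every pair of buds $s \preceq t$, the companion $\beta(s)$ actually lies on the local cycle $\gamma(t)$. Once this invariant holds in $\pi'$, the original induction order $\preceq$ is contained in $\sqsubseteq^*$, and the three clauses of \Cref{def:io} transfer directly from $\preceq$ (which was already a valid induction order) to $\sqsubseteq^*$: preservation along cycles and progress of each $x_s$ are inherited, and every strongly connected component still admits the same $\preceq$-maximum $s$ with witness $x_s$.

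The construction proceeds by well-founded recursion on the quotient of $\preceq$ by its induced equivalence, processing strongly connected components from $\preceq$-maximal downward. Given a component $\eta$ with $\preceq$-maximum $s_0$, I reroute the buds of $\eta$ so that each of them follows a path that passes through $\beta(s_0)$ before reaching its current companion. Following Sprenger and Dam, this is achieved by duplicating subtrees along the cycle $\gamma(s_0)$ and readjusting $\beta$: each offending bud $s \in \eta$ is equipped with a fresh copy of the relevant portion of the proof, and its companion is shifted onto that copy. Since every duplicated subtree is a literal copy of a subtree already present in $\pi$, the resulting $\pi'$ employs only sequents occurring in $\pi$. Injectivity is maintained by assigning each copy a fresh inner node. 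The preservation of \( \RCase \)-rules throughout the duplication ensures that the traces witnessing the induction order $\preceq$ are still visible in $\pi'$, so that the recursion hypothesis still supplies a valid induction order $\preceq'$ on $\pi'$ after the local transformation, albeit one that now aligns with the structural dependency on the processed component.

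The main obstacle is combinatorial: one must verify that each local unfolding strictly decreases a suitable measure (e.g.\ a lexicographic count, stratified by $\preceq$-rank, of pairs $s \preceq t$ with $\beta(s) \notin \gamma(t)$) while only potentially introducing new pairs at a strictly lower $\preceq$-stratum, so that the recursion terminates. The book-keeping is delicate because unfolding a component may reshape the strongly connected components lying $\preceq$-below it; one must check that these reshapings still admit $\preceq$-maxima and therefore remain amenable to the recursive hypothesis. Once termination and the stability of the induction order data are established, the translation of $\preceq$ into $\sqsubseteq^*$ on the fully unfolded $\pi'$ is essentially by definition, and the proof is complete.
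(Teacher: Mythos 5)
The paper does not actually carry out this construction: its proof is a citation to Theorem~5 and Lemma~5 of Sprenger and Dam, together with the observation that their transformation operates purely at the level of cyclic trees and traces and hence transfers to $\CHA$. Your proposal attempts to reconstruct that transformation, and the machinery you invoke --- unfolding by duplicating subtrees, recursing over strongly connected components ordered by $\preceq$, and controlling termination by a stratified measure --- is the right machinery. However, the invariant you aim for is backwards, and this is a genuine gap. You arrange that $s \preceq t$ implies $\beta(s) \in \gamma(t)$, i.e.\ the containment ${\preceq} \subseteq {\sqsubseteq^*}$, and then claim the three clauses of \Cref{def:io} ``transfer''. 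The preservation clause, however, quantifies over all $\sqsubseteq^*$-related pairs: for $\sqsubseteq^*$ to be an induction order one must show that $s \sqsubseteq^* t$ implies $\gamma(s)$ preserves $x_t$. The hypothesis only supplies preservation for $\preceq$-related pairs, so what is actually needed is (roughly) the \emph{reverse} containment ${\sqsubseteq^*} \subseteq {\preceq}$: every structural dependency surviving in $\pi'$ must be sanctioned by the given induction order. Under your invariant, $\pi'$ may retain pairs with $s \sqsubseteq t$ but $s \not\preceq t$ --- note that $s \sqsubseteq t$ forces $\{s,t\}$ to be strongly connected and hence $\preceq$-comparable, but possibly only in the direction $t \prec s$ --- and for such pairs nothing guarantees that $\gamma(s)$ preserves $x_t$. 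The point of the Sprenger--Dam unfolding is precisely to \emph{remove} these unsanctioned dependencies by duplicating the offending subproofs so that $\beta(s)$ no longer lies on $\gamma(t)$; your construction instead \emph{adds} dependencies so as to realise $\preceq$ inside $\sqsubseteq^*$, which helps with the $\preceq$-maximum clause but leaves the preservation clause unaddressed.

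Separately, you explicitly defer the termination argument and the stability of the induction-order data under local unfoldings, which is where essentially all of the combinatorial content of Sprenger and Dam's Lemma~5 and Theorem~5 lives; as written, the proposal establishes neither the correct target invariant nor the termination of the process that is supposed to reach it.
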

\begin{proof}
  Theorem 5 and Lemma 5 of \cite{sprengerStructureInductiveReasoning2003a}.
  While the proof system considered 
  in~\cite{sprengerStructureInductiveReasoning2003a} is for a logic different
  from $\HA$, the transformation proof reasons on the level of cyclic trees and thus readily
  transfer to the setting of $\CHA$.
\end{proof}

\begin{restatable}{lemma}{goodio}
  \label{lem:good-io}
  If $\CHA \vdash \Gamma \sdash \delta$ then there exists an injective
  $\CHA$-proof of $\Gamma \sdash \delta$ with induction order $\sqsubseteq^*$.
\end{restatable}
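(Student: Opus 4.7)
The lemma is essentially a composition of the three results immediately preceding it, so my plan is to chain them together carefully, verifying that each output is suitable as input for the next step.

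First, from the assumption $\CHA \vdash \Gamma \sdash \delta$ one obtains some $\CHA$-proof $\pi$ of $\Gamma \sdash \delta$. I would then apply \Cref{lem:exists-injective} to $\pi$ to produce an injective $\CHA$-proof $\pi_0$ of the same endsequent $\Gamma \sdash \delta$. It is important to check here that the transformation of \Cref{lem:exists-injective} preserves the endsequent, which is clear from its construction since it only inserts vacuous $\RWk$-applications above existing companion nodes rather than modifying the root.

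Next, because $\pi_0$ is a $\CHA$-proof, \Cref{lem:exists-io} guarantees the existence of an induction order on $\pi_0$. This provides the hypothesis needed to invoke \Cref{thm:ind-order-SD}: given the injective $\CHA$-proof $\pi_0$ together with its induction order, the theorem yields an injective $\CHA$-proof $\pi_1$ whose induction order is exactly $\sqsubseteq^*$, the transitive closure of the structural dependency order. Since \Cref{thm:ind-order-SD} stipulates that $\pi_1$ employs only sequents appearing in $\pi_0$, the endsequent of $\pi_1$ is in particular still $\Gamma \sdash \delta$, which completes the proof.

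The only subtlety to address is the order in which these lemmas are applied. In principle one might attempt to apply \Cref{thm:ind-order-SD} first and then ensure injectivity, but the hypothesis of \Cref{thm:ind-order-SD} requires injectivity from the outset; moreover, the inductive-order hypothesis of \Cref{thm:ind-order-SD} is supplied by \Cref{lem:exists-io}, whose statement needs the pre-proof to be a proof (i.e.\ to satisfy the global trace condition), which is inherited from the original $\CHA$-proof through the injectivity transformation, as noted in the proof of \Cref{lem:exists-injective}. I do not anticipate any genuine obstacle here, as the real technical content lies in \Cref{thm:ind-order-SD} (transferred from Sprenger and Dam) and \Cref{lem:exists-io}; the present lemma is a bookkeeping corollary that records the combined conclusion in the form required by \Cref{sec:io-to-sha}.
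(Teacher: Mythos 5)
Your proposal is correct and matches the paper's own proof, which is exactly the two-step composition of \Cref{lem:exists-injective} followed by \Cref{thm:ind-order-SD} (the paper leaves the appeal to \Cref{lem:exists-io} for supplying the induction order implicit, whereas you spell it out). The additional checks you record --- preservation of the endsequent and the necessity of establishing injectivity before invoking \Cref{thm:ind-order-SD} --- are sound bookkeeping and do not change the argument.
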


\begin{proof}
  By \Cref{lem:exists-injective} and \Cref{thm:ind-order-SD}.
\end{proof}

\subsection{Translating Proofs with $\sqsubseteq^*$ induction orders to $\AHA$}
\label{sec:io-to-sha}

In the beginning of \Cref{sec:combinatorics} we hinted at a normal form of
$\CHA$-proofs which is embodied by the proof system $\AHA$. We can now make this
precise: A $\CHA$-proof $\pi$ is in said normal form if it is injective and its
induction order is $\sqsubseteq^*$. Thus, \Cref{lem:good-io} proves that every
$\CHA$-proof can be transformed into one exhibiting this normal form. The
remainder of this section is concerned with translating $\CHA$-proofs in said
normal form to $\AHA$-proofs.

The translation from $\CHA$-proofs to $\AHA$-proofs is essentially verbatim,
i.e. each $\CHA$-rule is translated to its corresponding $\AHA$-rule. The only
difficulty of the translation is assigning each sequent a suitable stack
$\Lambda$ of companion labels and inserting the `$\Lambda$-bookkeeping rules'
$\RDrop$ and $\RComp$ at the right locations.
Towards the assignment of the $\Lambda$, consider a $\CHA$-proof $\pi = ((T,
\beta), \rho)$ with an induction order
$\preceq$ and
recall that \( \gamma(t) \) denotes the local cycle from $[\beta(t), t]$ for $t
\in \dom(\beta)$. For any given node $s \in T$ of $\pi$, the induction order
guarantees that $x_u \in \FV(\lambda(s))$, i.e. certain $x_u$ occurs free in the
sequent at $s$, for certain $u \in \dom(\beta)$. More concretely, we know this
to be the case if $s \in \gamma(t)$ and $u \preceq t$, i.e. $s$ occurs on a
local cycle $\gamma(t)$ which must \emph{preserve} the variable $x_u$ associated
with a local cycle $\gamma(u)$ such that $u \preceq t$.
We consider the set $S(s) \subseteq \dom(\beta)$ which collects these $u$:
\[
	S(s) = \{u \in \dom(\beta) ~|~ \exists t \in \dom(\beta).~s \in \gamma(t) \wedge t \sqsubseteq^* u\}
\]
and begin by proving that, when imposing a suitable ordering on them, the $S(s)$ can indeed be
treated as a stack. In the following, we denote by \( <_\beta \) the order on \(
\dom(\beta) \) induced by \( \beta \), given by $s <_\beta t$ iff $\beta(s) <
\beta(t)$.

\begin{restatable}{lemma}{alphatrans}
  \label{lem:alpha}
  Let $\pi = ((T, \beta), \lambda)$ be an injective $\CHA$-proof with induction order
  $\sqsubseteq^*$. 
  The set \( S(s) \) is linearly ordered by \( <_\beta \) for every \( s \in T \).
  Moreover, the function $\sigma : T \to \dom(\beta)^*$ assigning to \( s \in T
  \) the enumeration of \( S(s) \) according to \(<_\beta\) satisfies:
  \begin{enumerate}
  \item If $\varepsilon \not\in \im(\beta)$ then $\sigma(\varepsilon) =
    \varepsilon$; otherwise $\sigma(\varepsilon) = \beta^{-1}(\varepsilon)$.
  \item If $t \in \Chld(s)$ and \( t \not\in \im(\beta) \), then \( \sigma(t)
    \leq \sigma(s) \).
  \item If $t \in \Chld(s)$ and \( t \in \im(\beta) \), then there is $u \leq \sigma(s)$
    such that $\sigma(t) = u \cdot \beta^{-1}(t)$.
  \end{enumerate}
\end{restatable}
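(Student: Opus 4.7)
The plan is to establish the linear-ordering claim first, and then prove the three properties by analysing witnesses of membership in $S(s)$.

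First, I would observe that $\beta$ is monotone along $\sqsubseteq$-chains: if $v \sqsubseteq u$ then $\beta(v) \in \gamma(u) = [\beta(u), u]$, so $\beta(u) \leq \beta(v)$ in prefix order. Extending by transitivity, $v \sqsubseteq^* u$ implies $\beta(u) \leq \beta(v)$. Consequently, any $u \in S(s)$ witnessed by some $v \in \dom(\beta)$ (so $s \in \gamma(v)$ and $v \sqsubseteq^* u$) satisfies $\beta(u) \leq \beta(v) \leq s$. All $\beta$-values of elements of $S(s)$ are thus prefixes of $s$, pairwise comparable in prefix order and distinct by injectivity of $\beta$; hence $<_\beta$ linearly orders $S(s)$.

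For part 1, a witness for $u \in S(\varepsilon)$ must have $\beta$-value $\varepsilon$. If no such bud exists, then $S(\varepsilon) = \emptyset$. Otherwise, set $v_0 = \beta^{-1}(\varepsilon)$; any $u$ with $v_0 \sqsubseteq^* u$ has $\beta(u) \leq \beta(v_0) = \varepsilon$, forcing $u = v_0$ by injectivity.

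For parts 2 and 3, fix $t \in \Chld(s)$. Any witness $v$ for $u \in S(t)$ satisfies $\beta(v) \leq t \leq v$. If $\beta(v) \leq s$ then $v$ also witnesses $u \in S(s)$, since $s \leq t \leq v$ gives $s \in \gamma(v)$. Otherwise $\beta(v) = t$, which by injectivity forces $v = \beta^{-1}(t)$ and in particular $t \in \im(\beta)$. Thus $S(t) \subseteq S(s)$ when $t \notin \im(\beta)$, and $S(t) \subseteq S(s) \cup \{\beta^{-1}(t)\}$ otherwise. When present, $\beta^{-1}(t)$ has $\beta$-value $t$, strictly greater than all other $\beta$-values in $S(t)$ (which are bounded by $s$), so $\beta^{-1}(t)$ occupies the top of $\sigma(t)$ under $<_\beta$.

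The main obstacle is the prefix property underlying the conclusions: $S(s) \cap S(t)$ must be an initial segment of $S(s)$ in the $<_\beta$ order. The key sub-lemma I would prove is that if $\gamma(a) \cap \gamma(b) \neq \emptyset$ for distinct buds with $\beta(a) \leq \beta(b)$, then $b \sqsubseteq a$; this follows from $\beta(b) \leq r \leq a$ for any $r$ in the intersection, yielding $\beta(b) \in \gamma(a)$. Applied to witnesses for $u \in S(s) \cap S(t)$ and $u' \in S(s)$ with $\beta(u') < \beta(u)$, this comparability lets one transfer a witness for $u$ lying in the subtree below $t$ over to $u'$, establishing $u' \in S(t)$ and thereby the prefix property.
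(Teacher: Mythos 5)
Your skeleton coincides with the paper's own proof: linearity of $<_\beta$ on $S(s)$ via $\beta(u) \leq \beta(v) \leq s$, part 1 by the same degeneration at the root, and parts 2--3 reduced to the containment $S(t) \subseteq S(s) \cup \{\beta^{-1}(t)\}$ plus the prefix property. Your comparability sub-lemma (intersecting local cycles are $\sqsubseteq$-comparable, with the direction determined by comparing $\beta$-values) is correct and is precisely the one-step engine of the paper's argument. However, the prefix property --- the crux of the lemma --- is asserted rather than proved. Write $v$ for a witness of $u \in S(t)$ (so $t \in \gamma(v)$, $v \sqsubseteq^* u$, and, after the reduction below, $\beta(v) \leq s$) and $v'$ for a witness of $u' \in S(s)$. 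Since $s \in \gamma(v) \cap \gamma(v')$, your sub-lemma yields $v \sqsubseteq v'$ or $v' \sqsubseteq v$. The first case transfers immediately ($v \sqsubseteq v' \sqsubseteq^* u'$, so $v$ witnesses $u' \in S(t)$), but in the second case you only have $v' \sqsubseteq^* u'$ and no route from $v$ to $u'$: a single application of the comparability lemma does not ``transfer the witness''. What is needed --- and what the paper's proof does --- is an induction along the chain $v' = w_0 \sqsubseteq \cdots \sqsubseteq w_k = u'$: at each step either $\beta(w_{i+1}) \leq \beta(v)$, whence $v \sqsubseteq w_{i+1} \sqsubseteq^* u'$ and you are done, or $\beta(v) < \beta(w_{i+1})$ and $w_{i+1} \sqsubseteq v$, so the iteration continues; if the chain is exhausted without $v$ catching it, then $\beta(v) < \beta(u')$, contradicting $\beta(u') < \beta(u) \leq \beta(v)$. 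This termination argument is the only place the hypothesis $u' <_\beta u$ is actually used, and it is absent from your sketch.

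A second, smaller gap: you conclude $S(t) \subseteq S(s) \cup \{\beta^{-1}(t)\}$ from the dichotomy ``the witness $v$ has $\beta(v) \leq s$, or $v = \beta^{-1}(t)$'', but in the latter case it does not follow that $u = \beta^{-1}(t)$: one can have $\beta^{-1}(t) \sqsubseteq^* u$ with $u \neq \beta^{-1}(t)$, in which case your dichotomy places $u$ nowhere. The containment still holds, but to see it you must pass to the first element $v_1 \neq \beta^{-1}(t)$ of the witnessing chain, which satisfies $\beta(v_1) < t$ and hence $s \in \gamma(v_1)$ and $v_1 \sqsubseteq^* u$, giving $u \in S(s)$ after all. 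The paper makes exactly this extra case split. Both gaps are repairable, but the first one conceals the genuinely non-trivial part of the lemma.
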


\begin{proof}
  First, notice that per construction, if $t \in S(s)$ then $\beta(t) \leq s$.
  Hence, $S(s)$ is indeed linearly ordered by $<_\beta$ and the
  function $\sigma$ given above well-defined. Furthermore, $\abs{\beta^{-1}(t)}
  \leq 1$ by the injectivity of $\beta$.

  Define $\Down(t) \coloneq \{s \in \dom(\beta) ~|~ s <_\beta t\}$ for $t \in \dom(\beta)$.

  \begin{enumerate}[1.]
  \item We know that $S(\varepsilon) = \{\beta^{-1}(\varepsilon) \mid \text{if }
    \varepsilon \in \im(\beta) \}$ because $\varepsilon \in \gamma(t)$ entails
    $t = \beta^{-1}(\varepsilon)$.
  \item[2. and 3.] 
    We first prove
    that if $t \in \Chld(s)$ then $S(t) \subseteq S(s) \cup \beta^{-1}(t)$: Fix
    some $v \in S(t)$, i.e. such that there is $u$ with $t \in \gamma(u)$ and $u
    \sqsubseteq^* v$. There are two cases:
    \begin{description}
    \item[$\beta(u) \neq t$:] Then $s \in \gamma(u)$, as $t \in \Chld(s)$, and thus $v \in S(s)$.
    \item[$\beta(u) = t$:] Then we consider two cases arising from $u
      \sqsubseteq^* v$:
      \begin{description}
      \item[$u = v$:] Then $u \in \beta^{-1}(t)$.
      \item[$u \sqsubset v_1 \sqsubseteq^* v$:] Then $\beta(v_1) < t$ and thus $s \in \gamma(v_1)$, meaning $v \in S(s)$.
      \end{description}
    \end{description}

    What remains to be shown towards 2.\ and 3.\ is that buds are only removed
    in an $\leq_\beta$-upwards closed manner: if $t \in \Chld(s)$ and $u
    \in S(s) \setminus S(t)$, i.e.\ $u$ was removed from $S(-)$ in stepping from
    $s$ to $t$, then there is no $u' \in S(t)$ such that $u \leq_\beta u' \neq \beta^{-1}(t)$.

    We instead prove an equivalent statement: For $t
    \in \Chld(s)$, if $u \in S(t)$ with $u \neq \beta^{-1}(t)$ then $S(s) \cap
    \Down(u) \subseteq S(t)$. If $u \in S(t)$, there must be $v$ such
    that $t \in \gamma(v)$ and $v \sqsubseteq^* u$. As $\beta^{-1}(u) \neq t$, $s
    \in \gamma(u)$ as well. Now fix $u' \in S(s)
    \cap \Down(u)$, meaning there is $v'$ with $s \in \gamma(v')$ and $v'
    \sqsubseteq^* u'$, as $u' \in S(s)$, and $u' <_\beta u$, as $u' \in \Down(u)$.
    We distinguish two cases:
    \begin{description}
    \item[$v = v'$:] Then $u' \in S(s)$ as $v = v' \sqsubseteq^* u'$.
    \item[$v \neq v'$:] Because of $\beta(v), \beta(v') < s < v, v'$, we may
      distinguish two further cases:
      \begin{description}
      \item[$v \sqsubset v'$:]
        Then $v \sqsubset v' \sqsubseteq^* u'$ and thus $u' \in S(t)$.
      \item[$v' \sqsubset v$:] 
        This case is resolved with a further case distinction:
        For any $x$, if $v <_\beta x$ and $x \sqsubseteq^* u'$ then either $v \sqsubseteq^* u'$ or
        $v <_\beta u'$. From $v' \sqsubset v$ it then follows that $v <_\beta v'$ and
        thus:
        \begin{description}
        \item[$v \sqsubseteq^* u'$:] Then $u' \in S(t)$ as $t \in \gamma(v)$.
        \item[$v <_\beta u'$:] But $u' <_\beta u$ together with $v \sqsubseteq^*
          u$ yield $u' <_\beta u \leq_\beta v$, ruling out this option.
        \end{description}

        It remains to prove the aforementioned disjunction, which we do per induction on the $\sqsubset$-chain
        witnessing $x \sqsubseteq^* u'$: If $x = u'$ then $v <_\beta x = u'$. Thus
        suppose $x \sqsubset x' \sqsubseteq^* u'$. Because $v, x' \leq_\beta x$,
        we may distinguish two cases:
        \begin{description}
        \item[$x' \leq_\beta v$:] Then $x' \leq_\beta v <_\beta x \leq x'$ and thus $v \sqsubset x' \sqsubseteq^* u'$.
        \item[$v <_\beta x'$:] Then continue per inductive hypothesis with $x \coloneq x'$.
        \end{description}
      \end{description}
    \end{description}
  \end{enumerate}
\end{proof}

\begin{remark}
  Intuitively, \Cref{lem:alpha} can be read as stating that the preorder of $\pi$ being
  $\sqsubseteq$ means the $S(s)$ `behave as stacks' when ordered by $<_\beta$
  and the injectivity of $\pi$ means that these `stacks' are extended by at most
  one bud between two nodes, mirroring the $\RComp$-rule.
\end{remark}

Having accounted for the companion stacks $\Lambda$, the rest of the translation
is straight-forward.

\chatoaha*
\begin{proof}
  By \Cref{lem:good-io}, there exists an injective $\CHA$-proof $\pi = ((T, \beta), \rho)$ of
  $\Gamma \sdash \delta$ with $\sqsubseteq^*$ as its induction order. Let
  $\sigma : T \to \dom(\beta)^*$ be the labelling function of $\pi$ as defined
  in \Cref{lem:alpha}. We prove, per induction on the tree structure of
  $T$, starting at the leaves, that for each $s \in T$ there exists a $\AHA$-derivation of
  $\Lambda^s_{\sigma(s)} \mid \lambda(s)$. The companion stacks
  $\Lambda^s_{\sigma(s)}$ are recursively defined as follows:
  \[
    \Lambda^s_\varepsilon \coloneq \varepsilon
    \qquad
    \Lambda^s_{t \cdot u} \coloneq
    \begin{cases}
      (x^+_t \mapsto \lambda(s)); \Lambda^s_u & \text{if } [\beta(t), s] \text{
        passes through the right premise of } \RCase_{x_t} \\
      (x^-_t \mapsto \lambda(s)); \Lambda^s_u & \text{otherwise}
    \end{cases}
  \]
  We argue via a case distinction on the form of $s$:
  \begin{description}
  \item[$s \in \dom(\beta)$:] Then $\sigma(s) = u \cdot s \cdot u'$ for some $u, u' \in
    \dom(\beta)^*$ because $s \in S(s)$ if $s \in \dom(\beta)$. As $\pi$ has an induction order,
    $\RCase_{x_s}$ is applied along $\gamma(s)$. Hence, $\Lambda^s_{\sigma(s)} =
    \Lambda^s_u; (x_s^+ \mapsto \lambda(s)) ; \Lambda^s_{u'}$. The derivation
    below is as desired.
    \begin{comfproof}
      \AXC{}
      \LSC{$\RBud$}
      \UIC{$\Lambda^s_u; x^+_s \mapsto \lambda(s) \mid \lambda(s)$}
      \LSC{$\RDrop$}
      \UIC{$\Lambda^s_u; (x^+_s \mapsto \lambda(s)) ; \Lambda^s_{u'} \mid \lambda(s)$}
    \end{comfproof}
  \item[$\rho(s)$ an axiom:] Then $\varepsilon \mid \lambda(s)$ is an
    axiom of $\AHA$, meaning the derivation below is as desired, where $\dagger$
    is the axiom of $\AHA$ corresponding to $\rho(s)$.
    \begin{comfproof}
      \AXC{}
      \LSC{$\dagger$}
      \UIC{$\varepsilon; \mid \lambda(s)$}
      \LSC{$\RDrop$}
      \UIC{$\Lambda^s_{\sigma(s)}; \mid \lambda(s)$}
    \end{comfproof}
  \item[$s$ an inner node:] 
    Then $s$ was derived from $\Chld(s) = \{t_1, \ldots, t_n\}$ by an
    application of the $\CHA$-rule $R$ and there are $\AHA$-derivations of
    $\Lambda^{t_i}_{\sigma(t_i)} \mid \lambda(t_i)$.

    The rule $R$ has a
    corresponding $\AHA$-rule of the same name. Because of the well-formedness
    restriction on $\AHA$-sequents, the corresponding $\AHA$-rule may only be
    applied to derive $\Lambda^s_{\sigma(s)} \mid \lambda(s)$ if for every companion label
    $(x^\bullet \mapsto \ldots) \in \Lambda^s_{\sigma(s)}$ we have $x \in
    \FV(\lambda(s))$. In other words, if $x_{s'} \in \FV(\lambda(s))$ for every
    $s' \in S(s)$. As argued when we defined $S(s)$, the $s' \in S(s)$ are
    precisely those for which the induction order guarantees that $x_{s'} \in
    \FV(\lambda(s))$ and thus the $\AHA$-rule $R$ may be applied.

    To derive $\Lambda_{\sigma(s)}^s \mid
    \lambda(s)$ from $ \bigl\{ \Lambda_{\sigma(t_i)}^{t_i} \mid \lambda(t_i) \bigm| 1 \le i \le n \bigr\}$ we must
    distinguish two cases for each $t_i$: either $t_i \in \im(\beta)$ or not.
    For the purpose of illustration, let $t_1 \not\in \im(\beta)$ and $t_2 \in
    \im(\beta)$. By \Cref{lem:alpha} $\sigma(t_1) \leq \sigma(s)$ and there
    exists $u \leq \sigma(s)$ such that $\sigma(t_2) = u \cdot s'$ for
    $\beta(s') = t_2$.
    The following derivation, in which $t_3, \ldots, t_n$ are treated
    analogously to $t_1$ or $t_2$ based on the aforementioned case-distinction,
    is as desired:
    \begin{comfproof}
      \AXC{$\Lambda^{t_1}_{\sigma(t_1)} \mid \lambda(t_1)$}
      \LSC{$\RDrop$}
      \UIC{$\Lambda^{t_1}_{\sigma(s)} \mid \lambda(t_1)$}
      \AXC{$\Lambda^{t_2}_{u}; x^-_{s'} \mapsto \lambda(t_2) \mid \lambda(t_2)$}
      \LSC{$\RComp$}
      \UIC{$\Lambda^{t_2}_{u} \mid \lambda(t_2)$}
      \LSC{$\RDrop$}
      \UIC{$\Lambda^{t_2}_{\sigma(s)} \mid \lambda(t_2)$}
      \AXC{$\dotsm$}
      \AXC{$\Lambda^{t_i}_{\sigma(t_i)} \mid \lambda(t_i)$}
      \DOC{}
      \noLine
      \UIC{$\Lambda^{t_i}_{\sigma(s)} \mid \lambda(t_i)$}
      \AXC{$\dotsm$}
      \LSC{$R$}
      \QuinaryInfC{$\Lambda^s_{\sigma(s)} \mid \lambda(s)$}
    \end{comfproof}
    In the case of $t_2$ (and other companions), observe that $\Lambda^{t_2}_{\sigma(t_2)} =
    \Lambda^{t_2}_{u} ; x^-_{s'} \mapsto \lambda(t_2)$. Note also that the
    $\RDrop$-applications account for the fact that $\sigma(t_1) \leq \sigma(s)$
    and $\sigma(t_2) = u \cdot s'$ for $u \leq \sigma(s)$. For the derivation above
    to be valid, it remains to argue that the premises of $R$ indeed have
    companion stacks $\Lambda^{t_i}_{\sigma(s)}$ rather than
    $\Lambda^{s}_{\sigma(s)}$ (note the difference in superscripts). If $R \neq
    \RCase_{x}$ then $\Lambda^s_{u} = \Lambda^{t_i}_{u}$ because if $[\beta(s'),
    t_i]$ passes through the right-hand premise of $\RCase_{x_{s'}}$, this
    must have taken place below $s$ and thus the marker on
    $x^\bullet_{s'}$ in $\Lambda^{s}_{u}$ and $\Lambda^{t_i}_{u}$ for each
    $s'$ occurring in $u$ coincide. If $R = \RCase_{x}$ then this is more
    subtle. The companion labelling of the right-hand premise of $\RCase_{y}$ is
    $(\Lambda^s_{\sigma(s)})^{+x}$. This is precisely
    $\Lambda^{t_2}_{\sigma(s)}$ which, by an argument analogous to that of the
    previous case, differs from $\Lambda^s_{\sigma(s)}$ only by the markings on
    $y$, which are all $y^+$ in $\Lambda^{t_2}_{\sigma(s)}$ whereas
    $\Lambda^{s}_{\sigma(s)}$ may contain instances of $y^-$. On the
    other hand, $\Lambda_{\sigma(s)}^{s} = \Lambda_{\sigma(s)}^{t_1}$ for the
    left-hand of $\RCase_{y}$.
  \end{description}

  From this inductive argument, we can conclude that either $\varepsilon \mid
  \Gamma \sdash \delta$ (if $\varepsilon \not\in \im(\beta)$) or
  $x_{s} \mapsto \lambda(\varepsilon) \mid \Gamma \sdash \delta$ (if $\beta(s)
  = \varepsilon$). From the latter, $\varepsilon \mid \Gamma \sdash \delta$ can
  be derived by an application of $\RComp$. Thus $\AHA \vdash \Gamma \sdash \delta$.
\end{proof}

\section{Conclusion}
\label{sec:conclusion}

Combining the results of \Cref{sec:translation,sec:combinatorics} yields
the desired translation.

\begin{theorem}\label{lem:full}
  If $\CHA \vdash \Gamma \sdash \delta$ then $\HA \vdash \Gamma \sdash \delta$.
\end{theorem}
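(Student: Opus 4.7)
The plan is to obtain this theorem as a direct composition of the two translation results established in the preceding sections, which between them cover the entire passage from cyclic to purely inductive proofs. Assume $\CHA \vdash \Gamma \sdash \delta$. By \Cref{lem:cha-to-aha}, the cyclic proof can be transformed into a proof in the stack-labelled intermediate system, yielding $\AHA \vdash \Gamma \sdash \delta$. Applying \Cref{lem:key} to this $\AHA$-derivation then produces the desired $\HA$-proof, so $\HA \vdash \Gamma \sdash \delta$.

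There is essentially no new content to discharge here: the combinatorial heavy lifting, which consists of putting the cyclic proof into an injective form with induction order $\sqsubseteq^*$ via \Cref{lem:good-io} and then threading companion stacks $\Lambda^s_{\sigma(s)}$ through the proof tree in accordance with \Cref{lem:alpha}, lives entirely within the proof of \Cref{lem:cha-to-aha}. Likewise, the logical content, namely the construction of the induction invariant $\HH_\Lambda(\pi)$ together with the key properties of $I_\Lambda$ and $H_n(\pi)$ from \Cref{lem:end-lem,lem:ih-lem}, is localised in \Cref{lem:key}. Hence the present statement is simply an assembly theorem, and no obstacle arises.

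If one wished to present the proof slightly more informatively, one could note that, unwinding the two constructions, a $\CHA$-proof is first unfolded (potentially with a super-exponential size blow-up from cycle normal-form) into an injective proof with $\sqsubseteq^*$ as its induction order, then converted verbatim into an $\AHA$-proof by adding $\RComp$, $\RBud$ and $\RDrop$ bookkeeping, and finally translated rule-by-rule into an $\HA$-proof whose induction axioms are produced by the $\RComp$-case of \Cref{lem:key}. This foreshadows the discussion in \Cref{sec:conclusion} of the logical and size complexity of the overall translation, and makes explicit that the entire route from $\CHA$ to $\HA$ factors cleanly through $\AHA$.
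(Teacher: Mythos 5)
Your proposal is correct and follows exactly the paper's own argument: the theorem is obtained by composing \Cref{lem:cha-to-aha} (from $\CHA$ to $\AHA$) with \Cref{lem:key} (from $\AHA$ to $\HA$), with no further content to check.
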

\begin{proof}
  If $\CHA \vdash \Gamma \sdash \delta$ then $\AHA \vdash \Gamma \sdash \delta$
  by \Cref{lem:cha-to-aha}. Then $\HA \vdash \Gamma \sdash \delta$ by \Cref{lem:key}.
\end{proof}

In this section, we analyse various aspects of \Cref{lem:full} and compare the
method we present to others in the literature. We begin by discussing other
translation methods for cyclic proof systems of first-order arithmetic in
\Cref{sec:related-work}.
In \Cref{sec:pa}, we extend the
translation result to Peano arithmetic. \Cref{sec:log-comp} demonstrates that
our method yields Das' \cite{dasLogicalComplexityCyclic2020} logical complexity
bound $\CP_n \subseteq \IP_{n + 1}$ for $\PA$ and an analogous result for $\HA$.
In \Cref{sec:proof-size} we analyse the proof size increase incurred by our
method.
We close by discussing the applicability of our method
to other cyclic proof systems in \Cref{sec:applicabilty}.

\subsection{Related work}
\label{sec:related-work}

The literature contains translations of cyclic proofs into inductive proofs
for various logics~\cite{das_cut-free_2017,curzi_computational_2023,kuperberg_cyclic_2021,das_cyclic_2023,das_circular_2021,shamkanov_circular_2014,curzi_cyclic_2021,nollet_local_2018}. 
  Restricting attention to Heyting or Peano arithmetic, there
are two distinct approaches to such
translations already present in the literature. The first, introduced by
Simpson~\cite{simpsonCyclicArithmeticEquivalent2017} and later refined by
Das~\cite{dasLogicalComplexityCyclic2020}, proceeds by formalising the soundness
of cyclic Peano arithmetic in $\ACA$, obtaining $\PA$ proofs via the
conservativity of $\ACA$ over said system. The second approach, put forward by
Berardi and
Tatsuta~\cite{berardiEquivalenceInductiveDefinitions2017,berardiEquivalenceIntuitionisticInductive2017},
uses Ramsey-style order-theoretic principles formalisable in $\HA$ to obtain an
induction principle mirroring the cyclic structure of a $\CHA$ or $\CPA$ proof,
using which translating the original proof to $\HA$ or $\PA$, respectively,
becomes straight-forward.

\subsection{Peano arithmetic}
\label{sec:pa}

A proof system $\PA$ of Peano arithmetic can be obtained by modifying $\HA$ in the usual manner:
Considering multi-conclusion sequents $\Gamma \sdash \Delta$ instead of
single-conclusion sequents and replacing the intuitionistic logical rules of
$\HA$ with their classical counterparts. A cyclic proof system $\CPA$ for Peano
arithmetic can be obtained via analogous modifications to $\CHA$, also exchanging
the $\RCase_x$-rule for its multi-conclusion variant. Crucially, the notion of
$\CPA$-proofhood is exactly the same as that of $\CHA$.

\Cref{lem:full} extends to Peano arithmetic in a straightforward manner:
\begin{theorem}\label{lem:full-pa}
  If $\CPA \vdash \Gamma \sdash \Delta$ then $\PA \vdash \Gamma \sdash \Delta$.
\end{theorem}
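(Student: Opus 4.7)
The plan is to mirror the structure of the intuitionistic case by introducing a multi-conclusion stack-labelled system $\APA$ and repeating, in sequence, the combinatorial translation of \Cref{sec:combinatorics} and the logical translation of \Cref{sec:translation}. First, I would define $\APA$ by taking $\AHA$, replacing every intuitionistic sequent $\Lambda \mid \Gamma \sdash \delta$ by a multi-conclusion sequent $\Lambda \mid \Gamma \sdash \Delta$, substituting the classical multi-conclusion variants for each logical rule, and updating companion labels to have the form $x^\bullet \mapsto \Gamma \sdash \Delta$. Well-formedness and the three label rules $\RComp$, $\RBud$, $\RDrop$, as well as the multi-conclusion $\RCase_x$, carry over without change.

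Next I would transfer the combinatorial content of \Cref{sec:combinatorics}. The key observation is that everything in that section -- the definition of injective proofs, the structural dependency order $\sqsubseteq$, the existence of induction orders (\Cref{lem:exists-io}), the unfolding to $\sqsubseteq^*$ (\Cref{thm:ind-order-SD} and \Cref{lem:good-io}), and the translation of \Cref{lem:cha-to-aha} via \Cref{lem:alpha} -- reasons purely on the underlying cyclic tree and its traces, treating sequent content as opaque labels. The trace condition of $\CPA$ is literally the same as that of $\CHA$, so these proofs go through verbatim, yielding: if $\CPA \vdash \Gamma \sdash \Delta$ then $\APA \vdash \Gamma \sdash \Delta$.

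The remaining task is to adapt the translation of \Cref{sec:translation} to the classical, multi-conclusion setting. The induction invariant $\widehat{I}_\Lambda$ must be generalised: for a companion label $x^\bullet \mapsto \Gamma \sdash \Delta$ with $\vec{v}, \vec{w}$ as before, define
\[
  \widehat{I}_\Lambda \coloneq \forall \vec{u} \leq \vec{v}.\,\forall \vec{w}.\,\bigl(\bigwedge \Gamma \to \bigvee \Delta\bigr)[\vec{u} / \vec{v}]
\]
and retain the definition of $I_\Lambda$ via a bounded quantifier on $x$. The recursive construction of $H_n(\pi)$ is unchanged. The analogues of \Cref{lem:end-lem} and \Cref{lem:ih-lem} then follow by essentially the same proofs, since the quantifier, conjunction, disjunction and induction manipulations involved are sound in $\PA$ (indeed in $\HA$). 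With these lemmas in hand, \Cref{lem:key} is proven by the same induction on $\AHA$-derivations, the only difference being that the $\RCut$-style reasoning now delivers $\HH_\Lambda(\pi), \Gamma \sdash \Delta$ rather than $\HH_\Lambda(\pi), \Gamma \sdash \delta$, and that binary rules split or share the succedent according to the multi-conclusion classical calculus.

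The main potential obstacle is verifying that the $\RComp$ case of the translation still succeeds when $\Delta$ is a set: the derivation there uses $\bigwedge \Gamma \to \delta$ as an internal reformulation of the sequent, and one must check that $\bigwedge \Gamma \to \bigvee \Delta$ plays the same role inside $\widehat{I}_\Lambda$ and that the passage $\Gamma, \bigwedge \Gamma \to \bigvee \Delta \sdash \Delta$ used to finish the proof is straightforwardly derivable in $\PA$ -- which it is, by pure classical propositional reasoning. With no further ideas required, chaining the two translations yields \Cref{lem:full-pa} exactly as \Cref{lem:full} was obtained from \Cref{lem:cha-to-aha} and \Cref{lem:key}.
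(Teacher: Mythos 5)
Your proof is correct, but it is not the route the paper takes for this theorem. The paper's official proof is a two-line reduction: it applies the G\"odel--Gentzen negative translation to turn a $\CPA$-proof of $\Gamma \sdash \Delta$ into a $\CHA$-proof of $\Gamma^*, \neg\Delta^* \sdash \bot$ (checking that the translation preserves the cyclic structure and the trace condition), invokes the already-established \Cref{lem:full} to get $\HA \vdash \Gamma^*, \neg\Delta^* \sdash \bot$, and recovers $\PA \vdash \Gamma \sdash \Delta$ by the standard argument. What you propose instead --- a multi-conclusion system $\APA$, re-running the combinatorics of \Cref{sec:combinatorics} verbatim, and adapting the invariant to $\widehat{I}_\Lambda = \forall \vec{u} \leq \vec{v}.\forall \vec{w}.(\bigwedge\Gamma \to \bigvee\Delta)[\vec{u}/\vec{v}]$ --- is exactly the ``direct'' alternative the paper sketches in the remark immediately following its proof, down to the same modification of $\widehat{I}_\Lambda$, so the approach is sound and endorsed by the authors. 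The trade-off: the paper's reduction is far shorter and reuses \Cref{lem:cha-to-aha} and \Cref{lem:key} as black boxes, at the cost of having to verify that the negative translation commutes with cycles and traces; your direct route avoids any detour through $\HA$ and gives explicit control over the shape of the resulting $\PA$-proof, but requires re-checking every lemma of \Cref{sec:translation} in the multi-conclusion setting (which, as you note and the paper confirms, goes through with only the change to $\widehat{I}_\Lambda$).
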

\begin{proof}
  Let \( \varphi^* \) denote the Gödel--Gentzen translation of \( \varphi \). A
  simple extension of the standard argument shows that $\CPA \vdash \Gamma \sdash \Delta$ entails $\CHA \vdash
  \Gamma^*, \neg \Delta^* \sdash \bot$. By \Cref{lem:full}, one obtains $\HA
  \vdash \Gamma^*, \neg \Delta^* \sdash \bot$ which entails $\PA \vdash \Gamma
  \sdash \Delta$ by the standard argument.
\end{proof}

It should be noted that the method presented in this article can also be applied
to $\CPA$ directly. For this, a system $\APA$ can be derived by modifying $\AHA$
in the usual way. The translation from $\APA$ to $\PA$ proceeds analogously as
that in \Cref{sec:translation}, except that $\widehat{I_\Lambda} := \forall \vec{u} \leq \vec{v}. \forall
  \vec{w}.(\bigwedge \Gamma \to \bigvee \Delta)[\vec{u} / \vec{v}]$ to account
for the multi-conclusion sequents of the classical sequent calculi. The
results of \Cref{sec:combinatorics} also directly transfer to the setting of
Peano arithmetic.

\subsection{Logical complexity}
\label{sec:log-comp}

Using \Cref{lem:full}, one can translate a $\CHA$-proof $\pi$ into a $\HA$-proof
$\pi'$ with the same endsequent. In this section, we characterise the logical
complexity of $\pi'$ in terms of that of $\pi$.

We begin by fixing appropriate notions of logical complexity for $\HA$- and $\CHA$-proofs.
Note that the notion of $\CC$ presented below is based on the notions $\CP_n$
and $\CS_n$ presented in~\cite{dasLogicalComplexityCyclic2020}.

\begin{definition}
  Let $\lcls$ be a set of formulas.

  A $\HA$-proof $\pi$ is said to be in $\IC$
  if every instance of the induction axioms in $\pi$ is of the form
  \( \varphi(0), \forall x. \varphi(x) \to \varphi(Sx) \sdash \varphi(s)  \)
  with $\varphi \in \lcls$. If a sequent $\Gamma \sdash \delta$ is proven by a
  $\HA$-proof $\pi$ in $\IC$ we write $\IC \vdash \Gamma \sdash \delta$.

  A $\CHA$-proof $\pi$ is said to be in $\CC$
  if every bud and companion of $\pi$ is labelled by sequents $\Gamma \sdash
  \delta$ with $\Gamma, \delta \subseteq \lcls$.
  If a sequent $\Gamma \sdash \delta$ is proven by a
  $\CHA$-proof $\pi$ in $\CC$ we write $\CC \vdash \Gamma \sdash \delta$.
\end{definition}

Our characterisation relies on the following two operations on sets of formulas $\lcls$.

\begin{definition}
  Let $\lcls$ be a set of formulas.
  \begin{enumerate}
  \item We define the set $\lcls \sdash \lcls \coloneq \{ (\bigwedge \Phi) \to \psi \mid \Phi
    \subseteq \lcls, \psi \in \lcls\}$.
  \item We define $\Pi(\lcls)$ to be the smallest set of formulas extending \( \Theta \) that is:
    \begin{enumerate}[(i)]
    \item closed under bounded quantification: If $\varphi \in
      \Pi(\lcls)$,
      $x \in \var$ and $t$ is a term not containing \( x \), then $\forall x < t.\varphi \in
      \Pi(\lcls)$ and $\exists x < t.\varphi \in \Pi(\lcls)$, and
    \item closed under unbounded universal quantification: If $\varphi \in \Pi(\lcls)$ and $x
      \in \var$ then $\forall x.\varphi \in \Pi(\lcls)$.
    \end{enumerate}
  \end{enumerate}
\end{definition}

\begin{theorem}\label{lem:ha-complex}
  Let $\lcls$ be a set of formulas. If $\CC \vdash \Gamma \sdash \delta$ then
  $\IF \vdash \Gamma \sdash \Delta$.
\end{theorem}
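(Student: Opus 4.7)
The plan is to combine \Cref{lem:cha-to-aha} and \Cref{lem:key} as in the proof of \Cref{lem:full}, while tracking the complexity of the formulas on which induction is performed through both stages of the translation. The key observation is that, although cut formulas in the translation (notably the $H_n(\pi)$) can be complex, the only place where arithmetic induction is genuinely invoked is the $\RComp$-case of \Cref{lem:key}, and the induction formula there is fully determined by the shape of the cycle sequent.

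First, I would start from a $\CC$-proof of $\Gamma \sdash \delta$. Applying \Cref{lem:cha-to-aha}, which is purely combinatorial and preserves the sequents attached to companions, yields an $\AHA$-proof in which every companion label $x^\bullet \mapsto \Gamma' \sdash \delta'$ satisfies $\Gamma' \cup \{\delta'\} \subseteq \lcls$. Consequently, for every such label,
\[
  \widehat{I}_{\Lambda'} \;=\; \forall \vec{u} \leq \vec{v}.\,\forall \vec{w}.\,(\textstyle\bigwedge \Gamma' \to \delta')[\vec{u}/\vec{v}]
\]
lies in $\pcls{\lcls}$: the implicant is a member of $\lcls \sdash \lcls$ by definition, and $\pcls{\lcls}$ is closed under unbounded universal and bounded universal quantification. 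Hence $\widehat{I}_{\Lambda'} \in \rcls{\lcls}$.

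Next, I would re-run the translation of \Cref{lem:key}, verifying at each clause that no arithmetic induction is secretly needed outside the $\RComp$-case. The $\RCase_x$-clause is implemented using only the shape of $\HH_\Lambda(\pi)$ together with $\forall$L*, $\forall$R*, and purely logical steps; the $\RBin$, $\RUn$, and $\RDrop$ clauses are logical or structural; the supporting results \Cref{lem:end-lem}, \Cref{lem:ha-ren}, and \Cref{lem:ih-lem} are established by meta-induction and yield $\HA$-derivations that use only logic, equality, and the ordering axioms (no instances of the induction scheme arise in their derivations). In the $\RComp$-case, the single appeal to $\RInd$ is on $\widehat{I}_{\Lambda'}$, which by the previous paragraph is in $\fcls$.

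It then remains to confirm that $\RInd$ for $\fcls$ formulas is admissible in $\IF$. This follows by the standard reduction of course-of-values to regular induction: induction on $\varphi$ using $\RInd$ is derivable from regular induction on $\psi(x) \coloneq \forall y \leq x.\,\varphi[y/x]$, and since $\pcls{\lcls}$ is closed under bounded universal quantification, $\psi \in \fcls$ whenever $\varphi \in \fcls$. Similarly, the bounded-quantifier versions of $\forall$L and $\forall$R used throughout require no arithmetic induction at all. Putting these together yields an $\IF$-derivation of $\Gamma \sdash \delta$, establishing the theorem.

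The main obstacle I anticipate is bookkeeping: making sure that the cut formulas $H_n(\pi)$ (which contain unbounded existential quantifiers through the $\exists x''.\,x' = Sx'' \wedge \ldots$ pattern and so lie outside $\fcls$) never play the role of an induction formula in the translated derivation. This is resolved by noting that $H_n(\pi)$ occurs only through $\RCut$ and propositional manipulations; every application of the induction scheme in the translation appears in the $\RComp$-case and has $\widehat{I}_{\Lambda'} \in \fcls$ as its induction formula.
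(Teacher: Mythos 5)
Your proposal is correct and follows essentially the same route as the paper: track the $\lcls$-labelled companions through the combinatorial stage, observe that $\widehat{I}_{\Lambda}\in\fcls$, isolate the $\RComp$-case as the sole source of genuine induction, and reduce $\RInd$ there to ordinary induction on the bounded-quantified formula $\forall y\leq x.\,\widehat{I}_{\Lambda}$, which stays in $\fcls$. The only minor discrepancy is that the paper concedes some auxiliary $\Delta_0$-inductions for properties of $<$ rather than claiming none are needed, but this is harmless since $\Delta_0$ is included in the induction class $\rcls{\lcls}$ defining $\IF$.
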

\begin{proof}
  Let $\pi \vdash \Gamma \sdash \delta$ be a $\CHA$-proof in $\CC$. By
  \Cref{lem:exists-injective} there exists an injective $\CHA$-proof $\pi_i
  \vdash \Gamma \sdash \delta$. By inspecting the proof of
  \Cref{lem:exists-injective}, we observe that $\pi_i$ is still in $\CC$ because it
  has the same sequents as $\pi$. Applying \Cref{thm:ind-order-SD}
  yields a $\CHA$-proof $\pi_{\sqsubseteq^*} \vdash \Gamma \sdash \delta$ in $\CC$ with induction order
  $\sqsubseteq^*$. By
  \Cref{lem:cha-to-aha}, there then exists a $\AHA$-proof $\pi_{\AHA} \vdash \Gamma
  \sdash \delta$. Because all companions and buds of $\pi_{\sqsubseteq^*}$ are labelled with
  sequents $\Gamma' \sdash \delta'$ such that $\Gamma', \delta' \subseteq
  \lcls$, every companion label $x^\bullet \mapsto \Gamma' \sdash \delta'$
  occurring in $\pi_{\AHA}$ is such that $\Gamma', \delta' \subseteq \lcls$.

  It remains to scrutinise the translation of $\AHA$-proofs to $\HA$-proof in
  \Cref{lem:key}. We begin by analysing the logical complexity of the inductive
  hypotheses employed in the translation. Recall that the formula $\widehat{I}_\Lambda$ for
  $\Lambda = \Lambda'; x^\bullet \mapsto \Gamma \sdash \delta$ is defined as $\forall \vec{u} \leq \vec{v}. \forall \vec{w}.~(\bigwedge
  \Gamma \to \delta)[\vec{u} / \vec{v}]$. If $\Gamma, \delta \subseteq
  \lcls$ then $\bigwedge \Gamma \to \delta \in \lcls \sdash \lcls$ and
  $\widehat{I}_\Lambda \in \fcls$.
  Applying \Cref{lem:key} to $\pi_\AHA$ yields a $\HA$-proof $\pi' \vdash \Gamma
  \sdash \delta$. To verify that $\pi'$ is $\IF$, it suffices to check that all
  instances of induction axioms in $\pi'$ are on $\fcls$- or $\Delta_0$-formulas. Note first
  that \Cref{lem:key} inserts various $\Delta_0$-inductions into $\pi'$ to
  derive various properties about the relation $x < y$.
  Except for these, the only
  induction axioms inserted by \Cref{lem:key} occur in the case of the
  $\RComp$-rule and are instances of the following derived rule of $\HA$
  \[
    \inference[$\RInd$]{\Gamma, \forall y < x.~\widehat{I}_\Lambda[y / x] \sdash \widehat{I}_{\Lambda}}{\Gamma \sdash \forall x. \widehat{I}_\Lambda}.
  \]
  To derive the rule above, only one induction on $I' \coloneq \forall x \leq
  y.~\widehat{I}_\Lambda$ is required. Because every companion label of
  $\pi_\AHA$ contains a $\lcls$-sequent, $\widehat{I}_\Lambda \in
  \fcls$ and thus $I' \in \fcls$.
\end{proof}

The notions of $\IC$ and $\CC$ transfer directly to the setting of Peano arithmetic.
In~\cite{dasLogicalComplexityCyclic2020}, Das proves for $\PA$ that $\CS_n = \IS_{n + 1}$,
or equivalently, that $\CP_n = \IP_{n + 1}$. \Cref{lem:ha-complex} can be used
to conclude $\CP_n \subseteq \IP_{n + 1}$, the more involved direction of this equality.

\begin{theorem}
  In $\PA$, if $\CP_n \vdash \Gamma \sdash \Delta$ then $\IP_{n+1} \vdash \Gamma
  \sdash \Delta$.
\end{theorem}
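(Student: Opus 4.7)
The plan is to invoke the Peano-arithmetic analog of \Cref{lem:ha-complex} (whose transfer is explicitly noted at the end of \Cref{sec:pa}) instantiated with $\lcls \coloneq \Pi_n$, and then argue that the induction class $\rcls{\Pi_n}$ produced by this instantiation is contained, up to classical provable equivalence, in $\Pi_{n+1}$. Thus from $\CP_n \vdash \Gamma \sdash \Delta$ the transferred result yields $\IPar{\Pi_n} \vdash \Gamma \sdash \Delta$, i.e.\ a $\PA$-proof whose induction axioms are all on formulas in $\pcls{\Pi_n} \cup \Delta_0$, and it suffices to absorb these inductions into $\IP_{n+1}$.

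For the containment step, I would show that every formula in $\pcls{\Pi_n} = \Pi(\Pi_n \sdash \Pi_n)$ is classically equivalent to some $\Pi_{n+1}$ formula. The generators $(\bigwedge \Phi) \to \psi$ with $\Phi \subseteq \Pi_n$ and $\psi \in \Pi_n$ rewrite classically as $\neg(\bigwedge \Phi) \vee \psi$; since a finite conjunction of $\Pi_n$ formulas is (equivalent to) $\Pi_n$ and its negation is $\Sigma_n$, the result is a $\Sigma_n \vee \Pi_n$ formula, which prenexes to $\Pi_{n+1}$ (and to $\Sigma_{n+1}$). Closure of $\Pi_{n+1}$ under bounded quantification and under unbounded universal quantification then shows that all formulas built by the $\Pi(-)$ closure operation remain in $\Pi_{n+1}$. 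The $\Delta_0$-inductions are trivially available in $\IP_{n+1}$ for $n \ge 0$.

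Finally, an induction axiom on a $\pcls{\Pi_n}$-formula $\varphi$ can be replaced by the corresponding induction axiom on its provably equivalent $\Pi_{n+1}$ prenex form $\varphi^*$: by a handful of cuts against the $\PA$-proofs of $\varphi \leftrightarrow \varphi^*$ one transforms a use of induction on $\varphi$ into a use of induction on $\varphi^*$. Chaining these steps yields $\IPar{\Pi_n} \subseteq \IP_{n+1}$ as required.

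The main obstacle is the second step, where syntactic fidelity needs to be reconciled with the closure conditions: $\pcls{\Pi_n}$ is defined as a syntactic class that is not itself literally contained in $\Pi_{n+1}$, and so one must be explicit about the classical equivalences used to place it inside $\Pi_{n+1}$ and verify that these equivalences are themselves available in pure classical first-order logic (so that the rewriting required in the third step does not introduce fresh induction axioms). Once this bookkeeping is pinned down, the remaining steps are routine combinations of \Cref{lem:ha-complex} with standard prenexing manipulations of classical arithmetic.
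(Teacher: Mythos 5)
Your proposal is correct in outline but takes a genuinely different route from the paper. You stay entirely on the classical side: you invoke the $\PA$-analogue of \Cref{lem:ha-complex} (which \Cref{sec:pa} asserts but does not prove, via the system $\APA$) with $\lcls = \Pi_n$, obtaining $\IPar{\Pi_n} \vdash \Gamma \sdash \Delta$ directly, and then absorb $\rcls{\Pi_n}$ into $\Pi_{n+1}$. The paper instead reduces to the intuitionistic result it has actually proved: it applies the G\"odel--Gentzen translation to the $\CP_n$-proof, observes that each companion sequent $\Upsilon \sdash \Xi$ with $\Upsilon, \Xi \subseteq \Pi_n$ becomes $\Upsilon^*, \neg\Xi^* \sdash \bot$ with $\Upsilon^* \subseteq \Pi_n$ and $\neg\Xi^* \subseteq \Sigma_n$, hence a $\Delta_{n+1}$-sequent, applies \Cref{lem:ha-complex} as stated to get a proof in $\IPar{\Delta_{n+1}}$, and translates back to $\PA$. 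Your route avoids the bookkeeping of how the negative translation acts on companion complexity, at the price of resting on the unproven (though explicitly claimed) classical transfer of \Cref{lem:ha-complex}; the paper's route uses only what it has established, plus standard facts about the negative translation. Both routes converge on the same final containment --- your generators $(\bigwedge\Phi) \to \bigvee\Psi$ (note: multi-conclusion in the $\PA$ setting, a detail your write-up elides) are $\Sigma_n \vee \Pi_n$, exactly as in the paper's $\rcls{\Delta_{n+1}}$ case. One caveat on your ``main obstacle'': the equivalences placing $\pcls{\Pi_n}$ inside $\Pi_{n+1}$ are \emph{not} all available in pure first-order logic; closing $\Pi_{n+1}$ under bounded existential quantification requires collection at the appropriate level. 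Since $\IP_{n+1}$ proves the requisite collection scheme, the cuts against these equivalences remain within $\IP_{n+1}$ and the conclusion stands, but the justification is arithmetical, not logical --- the same point the paper glosses with ``classically, $\rcls{\Delta_{n+1}} \subseteq \Pi_{n+1}$.''
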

\begin{proof}
  Let $\pi \vdash \Gamma \sdash \Delta$ be a $\CPA$ proof in $\CP_n$. As
  described in \Cref{lem:full-pa}, the usual argument yields a proof $\pi^*
  \vdash \Gamma^*, \neg \Delta^* \sdash \bot$. Crucially, every companion in
  $\pi$ with sequent $\Upsilon \sdash \Xi$ is transformed into a companion in
  $\pi'$ with sequent $\Upsilon^*, \neg \Xi^* \sdash \bot$. Because $\Upsilon,
  \Xi \subseteq \Pi_n$ we obtain $\Upsilon^*, \Xi^* \subseteq \Pi_n$ and $\neg
  \Xi^* \subseteq \Sigma_n$, making $\Upsilon^*, \neg \Xi^* \sdash \bot$ a
  $\Delta_{n + 1}$-sequent. As any companion in $\pi'$ originates from a
  companion in $\pi$, this means $\pi'$ is in $\CD_{n + 1}$. Applying
  \Cref{lem:ha-complex} yields a $\HA$-proof $\pi''' \vdash \Upsilon^*, \neg \Xi^*
  \sdash \delta$ in $\IPar{\Delta_{n + 1}}$ which is extended into a $\PA$-proof $\pi'''' \vdash
  \Upsilon \sdash \Xi$ via the standard argument. It is easily observed that
  $\pi''''$ is still in $\IPar{\Delta_{n + 1}}$. Classically, $\rcls{\Delta_{n +
    1}} \subseteq
  \Pi_{n + 1}$ and thus $\pi''''$ is in $\IP_{n + 1}$ as desired.
\end{proof}

\subsection{Proof size complexity}
\label{sec:proof-size}

Another result of Das~\cite{dasLogicalComplexityCyclic2020} relates $\CPA$ and
$\PA$ in terms of proof size complexity: they show that translating from $\CPA$
to $\PA$ need only incur a singly-exponential size-increase, although this
`proof size efficient' translation maps $\CS_n$ proofs to $\IS_{n + 2}$, rather
than $\IS_{n + 1}$.

Our translation contains two sources of significant blow-up in proof size: (1) a
super-exponential blow-up when transforming arbitrary $\CHA$-proofs into
$\CHA$-proofs of cycle normal form (see \cite[Theorem
6.3.6]{brotherstonSequentCalculusProof2006}) and (2) an exponential blow-up
transforming injective $\CHA$-proofs in cycle normal form to $\CHA$-proofs with
$\sqsubseteq^*$ induction orders via \Cref{lem:good-io}. All other translation
steps only incur polynomial size increase. Thus, we only provide an
improvement on Das' translation witnessing $\CS_n \subseteq \IS_{n + 1}$, which exhibits a
non-uniform size increase.

\subsection{Applicability of the method}
\label{sec:applicabilty}

We conjecture that both the method of Sprenger and Dam, and our refinement
thereof, are more widely applicable
for translating cyclic into inductive proofs, even for logics unrelated to
arithmetic.
As we see it, there are three major requirements on a cyclic proof system
and its logic to be amenable to the translation methods:

\paragraph{Internalising progress} The relations $<$ and
  $\leq$ are key to the formulation of the inductive hypotheses. They are used
  to express which inductive hypothesis is `ready' to be applied and for which a
  progress step must yet occur, both in Sprenger and Dam's method and our
  refinement.
  Transferring the methods to
  other logics requires formulating suitable inductive hypotheses which will
  likely rely on analogous methods of `internalising' the progress of the cyclic
  proof system in the logic.
  At this stage, however, it is unclear how the idea of internalisation can
  be formulated in a general sense that is applicable to systems that lack
  the explicit expressivity to reason about approximations of fixed points,
  for example, type systems and modal logics.
\paragraph{Cut} The treatment of companions, both in Sprenger and
  Dam~\cite{sprengerStructureInductiveReasoning2003a} and in our method, relies on
  the $\RCut$-rule to introduce a new inductive hypothesis. This seems likely to
  be a key feature of the translation method, meaning any system amenable to it
  must feature a $\RCut$-rule or $\RCut$-admissability.
\paragraph{Induction orders} To carry out the method as presented by Sprenger
  and Dam~\cite{sprengerStructureInductiveReasoning2003a} or us, all cyclic proofs
  of the system must be justifiable by induction orders, or it must at least be
  possible to transform them in such a way that they always become justifiable by
  induction orders. For trace conditions in
  which the objects traced along branches (e.g. variables, terms, formulas) always
  have a unique successor, as in $\CHA$, an argument analogous to
  \Cref{lem:exists-io} can likely be carried out. It is known that not all cyclic
  proof systems exhibit this property, for example those that allow
  `contractions' in their traces such as
    Simpson's~\cite{simpsonCyclicArithmeticEquivalent2017} cyclic proof system
    for PA. A possible path to extending our method to
  such cases is provided by the work of Afshari and
  Leigh~\cite{afshariFinitaryProofSystems2016}: They provide a translation from
  cyclic to inductive proofs for the modal $\mu$-calculus. Rather than relying on
  induction orders, their translation is obtained via reset proof systems, another
  soundness condition for cyclic proof system. Similarly to Sprenger and Dam's
  approach, the method can be split into two parts: (1) bringing arbitrary cyclic
  proofs into a combinatorial normal-form and (2) translating proofs in said
  normal-form into inductive proofs. While part (2) of their method was found to
  contain a mistake~(see \cite{kloibhofer_note_2023}), part (1) remains correct. Indeed, their notion of
  normal-form bears a close resemblance to $\sqsubseteq^*$ induction orders.
  As Leigh and
  Wehr~\cite{leighGTCResetGenerating2023} demonstrates, most cyclic proof
  systems admit equivalent reset systems, so adapting the method presented here to
  the setting of reset-proofs should yield a method that is more widely applicable.

\printbibliography{}

\end{document}